\newtheorem{theorem}{Theorem}[section]
\newtheorem{lemma}[theorem]{Lemma}
\newtheorem{corollary}[theorem]{Corollary}
\newtheorem{remark}[theorem]{Remark}
\newtheorem{proposition}[theorem]{Proposition}
\newtheorem{definition}[theorem]{Definition}
\newtheorem{fact}[theorem]{Fact}
\newtheorem{problem}[theorem]{Problem}
\numberwithin{equation}{section}
\newcommand{\CC}{C_k}
\newcommand{\NN}{\mathbb{N}}
\newcommand{\w}{\omega}
\newcommand{\e}{\varepsilon}
\renewcommand{\phi}{\varphi}
\newcommand{\supp}{\mathrm{supp}}
\title[Dunford--Pettis type properties for function spaces]{Dunford--Pettis type properties and the Grothendieck property for  function spaces}
\author{S. Gabriyelyan}
\address{Department of Mathematics, Ben-Gurion University of the Negev, Beer-Sheva, P.O. 653, Israel}
\email{saak@math.bgu.ac.il}
\author{J. K\c akol}
\address{ Faculty of Mathematics and Informatics. A. Mickiewicz University, 61-614 Pozna\'n, and Institute of Mathematics Czech Academy of Sciences, Prague}
\email{kakol@amu.edu.pl}
\subjclass[2000]{Primary 46A03; Secondary 54H11}
\keywords{function space, Dunford--Pettis property, Grothendieck property}
\thanks{Jerzy Kakol gratefully acknowledges the ﬁnancial support he received from the Center for Advanced Studies in Mathematics of the Ben Gurion University of the Negev during his visit April, 2018.}
\begin{document}

\begin{abstract}
For a Tychonoff space $X$, let $\CC(X)$ and $C_p(X)$ be the spaces of real-valued continuous functions $C(X)$ on $X$ endowed with the compact-open topology and the pointwise topology, respectively. If $X$ is compact, the classic result of A.~Grothendieck states that $\CC(X)$ has the Dunford-Pettis property   and the sequential Dunford--Pettis property. We extend Grothendieck's result by showing  that  $\CC(X)$ has both the Dunford-Pettis property   and the sequential Dunford-Pettis property if $X$ satisfies one of the following conditions: (i) $X$ is a hemicompact space, (ii) $X$ is a cosmic space (=a continuous image of a separable metrizable space), (iii) $X$ is the ordinal space $[0,\kappa)$ for some ordinal $\kappa$, or (vi) $X$ is a locally compact paracompact space. We show that if $X$ is a cosmic space, then $\CC(X)$ has the Grothendieck property if and only if every functionally bounded subset of $X$ is finite. We prove that $C_p(X)$ has the Dunford--Pettis property and the sequential Dunford-Pettis property for every Tychonoff space $X$, and $C_p(X) $ has the Grothendieck property if and only if every functionally bounded subset of $X$ is finite.
\end{abstract}

\maketitle


\section{Introduction}

The class of Banach spaces with the Dunford--Pettis property enjoying also   the Grothendieck property plays an essential and important role in the general theory of Banach spaces (particularly of continuous functions) and vector measures with several remarkable applications, we refer the reader to  \cite{Dales-Lau}, \cite{Diestel-DP}, \cite{Diestel} and   \cite[Chapter~VI]{Diestel-Uhl}.

It is well known by a result of A.~Grothendieck, see \cite[Corollary~4.5.10]{Dales-Lau}, that   for every injective compact space $K$, the Banach space $C(K)$ has the Grothendieck property. Consequently, this applies for each extremely disconnected compact space $K$. A.~Grothendieck also proved that the  Lebesgue spaces $L^{1}(\mu)$, the  spaces $\pounds^{\infty}$ and $\pounds^{1}$  have the Dunford--Pettis  property.

This line of research was continued by J.~Bourgain in \cite{bourgain}, where he showed that the spaces $C_{L^{1}}$ and $L^{1}_{C}$ enjoy also the Dunford--Pettis property. Moreover, in \cite{bourgain1} J.~Bourgain provided interesting sufficient conditions for subspaces $L$ of the Banach space $C(K)$ to have the Dunford--Pettis property. This results have been used by J.A.~Cima and R.M.~Timoney \cite{Ci-Ti} to study the Dunford--Pettis property for $T$-ivariant algebras  on $K$.
F.~Bombal and I.~Villanueva characterized in \cite{BomVil} those compact spaces $K$ such that $C(K)\hat{\otimes} C(K)$ have the Dunford--Pettis property.

Quite recently this area of research  around Dunford--Pettis property for Banach spaces   has been extended to  a more general setting  including general theory of locally convex spaces. This approach enabled specialists to apply this work to concrete problems related with the mean ergodic operators in Fr\'{e}chet spaces; we refer to articles \cite{ABR}, \cite{ABR1}, \cite{ABR2}, \cite{albaneze} and \cite{Bonet-Ricker} .

The classical Dunford--Pettis theorem states that for any measure $\mu$ and each Banach space $Y$, if $T:L_1(\mu)\to Y$ is a weakly compact linear operator, then $T$ is completely continuous (i.e., $T$ takes weakly compact sets in $L_1(\mu)$ onto norm compact sets in $Y$).
This result motivates Grothendieck to introduce the following property (for comments, see \cite[p.633-634]{Edwards}):
\begin{definition}[\cite{Grothen}] \label{def:DP}
A locally convex space $E$ is said to have  the {\em Dunford--Pettis property} ($(DP)$ {\em property} for short) if every continuous linear operator $T$ from $E$ into a quasi-complete locally convex space $F$, which transforms bounded sets of $E$ into relatively weakly compact subsets of $F$, also transforms absolutely convex weakly compact subsets of $E$ into relatively compact subsets of $F$.
\end{definition}
Actually, it suffices that $F$ runs over the class of Banach spaces, see \cite[p.633]{Edwards}.

A.~Grothendieck proved in \cite[Proposition~2]{Grothen} that a Banach space $E$ has the $(DP)$ property if and only if  given weakly null sequences $\{ x_n\}_{n\in\NN}$ and $\{ \chi_n\}_{n\in\NN}$ in $E$ and the Banach dual $E'$ of $E$, respectively, then $\lim_n \chi_n(x_n)=0$. He used this result to show that every Banach space $C(K)$ has the $(DP)$ property, see \cite[Th\'{e}or\`{e}me~1]{Grothen}. Extending this result to locally convex spaces (lcs, for short) and following \cite{Gabr-free-resp}, we  consider the following ``sequential'' version of the $(DP)$ property.

\begin{definition} \label{def:sDP}
A locally convex space $E$ is said to have  the {\em sequential Dunford--Pettis property} ($(sDP)$ {\em property})  if  given weakly null sequences $\{ x_n\}_{n\in\NN}$ and $\{ \chi_n\}_{n\in\NN}$ in $E$ and the strong dual $E'_\beta$ of $E$, respectively, then $\lim_n \chi_n(x_n)=0$.
\end{definition}

It turns out, as  A.A.~Albanese, J.~Bonet and W.J.~Ricker proved in \cite[Corollary~3.4]{ABR}, that the $(DP)$ property and the $(sDP)$ property coincide for the much wider class of Fr\'{e}chet spaces (or, even more generally, for  strict $(LF)$-spaces). In \cite[Proposition~3.3]{ABR} they showed  that every barrelled quasi-complete space with the  $(DP)$ property has also the  $(sDP)$ property.
For further results we refer the reader to \cite{ABR,Bonet-Lin-93,BFV,Diestel-DP} and reference therein.

For a Tychonoff (=completely regular and Hausdorff) space $X$, we denote by $\CC(X)$ and $C_p(X)$ the spaces of real-valued continuous functions $C(X)$ on $X$ endowed with the compact-open topology and the pointwise topology, respectively.
Being motivated by the aforementioned discussion and results it is natural to ask:

\begin{problem} \label{prob:Ck(X)-DPP}
Characterize Tychonoff spaces $X$ for which $\CC(X)$ and $C_p(X)$ have the $(DP)$ property or the $(sDP)$ property.
\end{problem}

A.~Grothendieck proved that the Banach space $C(\beta\NN)$, where $\beta\NN$ is the Stone--\v{C}ech compactification of the natural numbers $\NN$ endowed with the discrete topology, has the following property: Any weak-$\ast$ convergent sequence in the Banach dual of $C(\beta\NN)$ is also weakly convergent. This result motivates the following important property.
\begin{definition} \label{def:Grothendieck}
A locally convex space $E$ is said to have  the {\em Grothendieck property}  if every weak-$\ast$ convergent sequence in the strong dual $E'_\beta$ is weakly convergent.
\end{definition}
These results mentioned above motivate also the second general question considered in the paper.

\begin{problem} \label{prob:Ck(X)-Grothendieck}
Characterize Tychonoff spaces $X$ for which $\CC(X)$ and $C_p(X)$ have the Grothendieck property.
\end{problem}

For spaces $C_p(X)$ we obtain complete answers to Problems \ref{prob:Ck(X)-DPP} and  \ref{prob:Ck(X)-Grothendieck}.
Recall that a subset $A$ of a topological space $X$ is called {\em functionally bounded in $X$} if every $f\in C(X)$ is bounded on $A$.

\begin{theorem} \label{t:DP-Cp}
Let $X$ be a Tychonoff space. Then:
\begin{enumerate}
\item[{\rm (i)}] $C_p(X)$ has the $(sDP)$ property;
\item[{\rm (ii)}] $C_p(X)$ has the $(DP)$ property;
\item[{\rm (iii)}] $C_p(X) $ has the Grothendieck property if and only if every functionally bounded subset of $X$ is finite.
\end{enumerate}
\end{theorem}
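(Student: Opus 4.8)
The plan uses the standard duality for $C_p(X)$: the dual is $L(X):=\spn\{\delta_x:x\in X\}$, the topology of $C_p(X)$ coincides with $\sigma(C(X),L(X))$ (so weak nullity in $C_p(X)$ means pointwise nullity), and $L(X)_\beta:=(C_p(X))'_\beta$ is the strong dual. For (ii), the point is that every continuous linear operator $T\colon C_p(X)\to F$ into a Banach space $F$ has finite rank: $T^{-1}(B_F)$ contains a basic $0$-neighbourhood $\{f:|f(x_i)|<\e,\ i\le k\}$, hence contains $N:=\bigcap_{i\le k}\ker\delta_{x_i}$, a subspace of codimension $\le k$; since $N=\lambda N$ for every $\lambda>0$ we get $T(N)\subseteq\bigcap_{\lambda>0}\lambda B_F=\{0\}$, so $T$ factors through the finite-dimensional $C_p(X)/N$. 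As it suffices to test the $(DP)$ property against Banach targets, and a finite-rank operator carries every bounded (hence every absolutely convex weakly compact) set to a relatively compact set, (ii) follows.

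For (i) the crux is: every bounded subset $H$ of $L(X)_\beta$ lies in a finite-dimensional subspace of $L(X)$. Indeed, set $S:=\bigcup_{\mu\in H}\supp\mu$ and suppose $S$ is infinite; choose distinct $x_1,x_2,\dots\in S$ forming a discrete subspace (an infinite Hausdorff space always contains one), and, using regularity of $X$, pairwise disjoint open $W_k\ni x_k$ which we may take to meet $\supp\mu^{(k)}$ only in $x_k$, where $\mu^{(k)}\in H$ is a witness with $c_k:=\mu^{(k)}(\{x_k\})\neq0$. Pick $f_k\in C(X)$ with $0\le f_k\le1$, $f_k(x_k)=1$, $f_k$ vanishing off $W_k$; disjointness of the $W_k$ makes $B:=\{t_kf_k:k\in\NN\}$ pointwise bounded, hence bounded in $C_p(X)$, for any scalars $t_k$, while $|\mu^{(k)}(t_kf_k)|=t_k|c_k|$, so choosing $t_k:=k/|c_k|$ forces $\sup_{\mu\in H}\sup_{g\in B}|\mu(g)|=\infty$, contradicting that $H$ is bounded in $L(X)_\beta$. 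Hence $S$ is finite. Then (i) is immediate: if $f_n\to0$ pointwise and $\mu_n\to0$ weakly in $L(X)_\beta$, the set $\{\mu_n\}$ is bounded in $L(X)_\beta$, hence contained in some $V=\spn\{\delta_{y_1},\dots,\delta_{y_m}\}$; choosing $h_j\in C(X)$ with $h_j(y_j)=1$ and $h_j(y_i)=0$ for $i\neq j$, the coefficients $c_j^{(n)}$ of $\mu_n=\sum_jc_j^{(n)}\delta_{y_j}$ satisfy $c_j^{(n)}=h_j(\mu_n)\to0$ (each $h_j$ defines a strongly continuous functional on $L(X)$), and $\sup_n|f_n(y_j)|<\infty$, so $\mu_n(f_n)=\sum_jc_j^{(n)}f_n(y_j)\to0$.

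For (iii): if every functionally bounded subset of $X$ is finite, then $C_p(X)$ is barrelled (Buchwalter--Schmets), so weak-$*$ bounded subsets of $L(X)$ are equicontinuous and hence, lying in the polar of some $\{|f(x_i)|\le\e\}$ and therefore in $\spn\{\delta_{x_1},\dots,\delta_{x_k}\}$, finite-dimensional; a weak-$*$ convergent sequence is thus confined to a finite-dimensional subspace, on which weak-$*$ convergence is convergence in $L(X)_\beta$, in particular weak convergence — so $C_p(X)$ has the Grothendieck property (equivalently: the hypothesis makes $C_p(X)$ semireflexive, whence $(L(X)_\beta)'=C(X)$ and the weak and weak-$*$ topologies on $L(X)$ coincide). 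Conversely, if $A\subseteq X$ is infinite and functionally bounded, extract a countably infinite discrete subspace $\{a_k\}\subseteq A$ with pairwise disjoint open $W_k\ni a_k$; then $\phi:=\sup_k k\chi_{W_k}$ is finite-valued and lower semicontinuous, so the function $g$ with $g(a_k)=k$ and $g\equiv0$ off $\{a_k\}$ satisfies $|g|\le\phi$, and a polar computation shows that any $h\in\RR^X$ dominated in modulus by a finite lower semicontinuous function represents an element of $(L(X)_\beta)'=(C_p(X))''$; in particular $g\in(L(X)_\beta)'$. Now $\mu_k:=\tfrac1k\delta_{a_k}$ is weak-$*$ null (since $f$ is bounded on $\{a_k\}$, $\mu_k(f)=\tfrac1kf(a_k)\to0$) while $g(\mu_k)=1\not\to0$, so $(\mu_k)$ is weak-$*$ convergent but not weakly convergent in $L(X)_\beta$, and $C_p(X)$ lacks the Grothendieck property.

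I expect the boundedness lemma of the second paragraph to be the main obstacle, along with its two supporting facts: that a countable discrete subspace of a regular space always admits a pairwise disjoint open expansion, and the identification of $(C_p(X))''$ with the functions dominated in modulus by a finite lower semicontinuous function — for which one needs that nonnegative finite lower semicontinuous functions on a Tychonoff space are pointwise suprema of their continuous minorants. Everything else is bookkeeping once these are in place.
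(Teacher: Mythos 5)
Your proposal is correct, and its engine is the same as the paper's: the claim that every bounded subset of the strong dual $L(X)_\beta$ of $C_p(X)$ spans a finite-dimensional subspace is exactly the paper's Proposition \ref{p:Cp-strong-feral}(i) (``$L_\beta(X)$ is feral''), and your proof of it --- disjoint open expansions of a discrete sequence of support points plus bump functions with prescribed growth --- is the paper's proof almost verbatim. Where you genuinely diverge is in how you exploit this. For (ii) the paper invokes Edwards' precompactness criterion (Theorem \ref{t:Edwards-DP}) and the observation that $\tau_{\Sigma'}$ collapses to the weak topology; you instead note that every continuous linear operator from $C_p(X)$ into a Banach space has finite rank (its kernel contains a finite-codimensional subspace contained in $\bigcap_{\lambda>0}\lambda B_F$), which is more elementary, does not use ferality at all, and gives $(DP)$ outright since finite-rank operators send bounded sets to relatively compact ones. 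For the converse in (iii) the paper argues abstractly: Grothendieck $\Rightarrow$ $c_0$-barrelled (a non-equicontinuous weak-$\ast$ null sequence is infinite-dimensional, hence strongly unbounded by ferality, hence not weakly convergent), and then $c_0$-barrelled $\Rightarrow$ barrelled $\Rightarrow$ all functionally bounded sets finite via Buchwalter--Schmets; you instead exhibit an explicit element $g$ of the bidual $(L(X)_\beta)'$, dominated by a finite lower semicontinuous majorant $\sup_k k\chi_{W_k}$, that separates the weak-$\ast$ null sequence $\frac1k\delta_{a_k}$ from weak convergence. All the supporting facts you flag do hold: an infinite Hausdorff space contains an infinite discrete subspace, a countable discrete subspace of a regular space admits a disjoint open expansion (shrink to $V_k$ with $\overline{V_k}\subseteq U_k$ and set $W_k:=V_k\setminus\bigcup_{i<k}\overline{V_i}$), a nonnegative finite lsc function on a Tychonoff space is the pointwise supremum of its continuous minorants, and the resulting polar computation does place $g$ in $(L(X)_\beta)'$. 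The trade-off: the paper's route is shorter and recycles ferality one more time; yours is more self-contained and constructive, at the cost of having to develop the description of a piece of the bidual of $C_p(X)$.
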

We prove Theorem \ref{t:DP-Cp} in Section \ref{sec:Cp-GDP}. In this section we also recall some general results related to the $(DP)$ property and the Grothendieck property which are essentially used in the article.

For spaces $\CC(X)$ the situation is much more complicated. Following E.~Michael \cite{Mich}, a Tychonoff space $X$ is  a {\em cosmic space}  if $X$ is a continuous image of a separable metrizable space.
The following theorem is our second main result.
\begin{theorem} \label{t:DP-Ck}
Assume that a Tychonoff space  $X$ satisfies one of the following conditions:
\begin{enumerate}
\item[{\rm (i)}] $X$ is a hemicompact space;
\item[{\rm (ii)}] $X$ is a cosmic space;
\item[{\rm (iii)}] $X$ is the ordinal space $[0,\kappa)$ for some ordinal $\kappa$;
\item[{\rm (iv)}] $X$ is a locally compact paracompact space.
\end{enumerate}
Then $\CC(X)$ has the $(sDP)$ property and the $(DP)$ property.
\end{theorem}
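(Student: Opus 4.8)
The plan is to reduce all four cases to a single workhorse: a locally convex space $E$ that is a (reduced) projective or inductive limit in a suitable sense of spaces $C(K)$ for compact $K$ should inherit the $(DP)$ and $(sDP)$ properties, because the characterizations of these properties are purely in terms of weakly null sequences and such sequences in $E$ (and in its strong dual) are controlled by the factors $C(K)$. Concretely, I would first establish a lemma of the form: if $E = \varprojlim C(K_n)$ is a countable reduced projective limit of Banach spaces $C(K_n)$ with $K_n$ compact (equivalently $E$ is a Fr\'echet space that is a projective limit of $C(K)$'s), or more generally if $E$ is a strict $(LF)$-space whose steps are such Fr\'echet spaces, then $E$ has the $(DP)$ and $(sDP)$ properties. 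The case of Fr\'echet spaces follows from \cite[Corollary~3.4, Proposition~3.3]{ABR} once one knows the $(DP)$ property, and the $(DP)$ property for the projective limit is inherited from the Banach factors $C(K_n)$ (each of which has $(DP)$ by Grothendieck) by a standard factorization argument: a bounded-to-weakly-compact operator out of $E$ factors through some $C(K_n)$ up to the relevant approximation, using that absolutely convex weakly compact subsets of $E$ project into such subsets of $C(K_n)$. For the $(LF)$ case one invokes the strict $(LF)$ part of \cite[Corollary~3.4]{ABR}.

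Next I would treat the four cases by exhibiting $\CC(X)$ in the required form. \textbf{(i)} If $X$ is hemicompact, there is a fundamental sequence of compact sets $K_1 \subseteq K_2 \subseteq \cdots$, so $\CC(X) = \varprojlim_n C(K_n)$ is a Fr\'echet space of the desired type (the restriction maps $C(K_{n+1}) \to C(K_n)$ need not be surjective, but one passes to the reduced projective limit / closures of images, which are again $C$-spaces of compacta or at least closed subspaces thereof — one must check the $(DP)$ property is inherited by closed subspaces complemented enough for the factorization, or argue directly on weakly null sequences). \textbf{(iv)} A locally compact paracompact space is a topological sum $X = \bigoplus_{i\in I} X_i$ of locally compact $\sigma$-compact spaces, hence $\CC(X) = \prod_{i\in I} \CC(X_i)$ with each $\CC(X_i)$ a Fr\'echet space from case (i); then one shows the $(DP)$ and $(sDP)$ properties pass to arbitrary products (again via the sequential characterization: a weakly null sequence in a product and one in its strong dual — a locally convex direct sum — are each supported, up to small error, on countably many coordinates, reducing to the Fr\'echet factors). \textbf{(iii)} The ordinal space $[0,\kappa)$ is locally compact; if $\kappa$ has uncountable cofinality it is even countably compact so $\CC = C_p$-adjacent arguments or case (ii)-type arguments apply, and in general one decomposes along the cofinality of $\kappa$ to reduce to the paracompact/hemicompact situations, so (iii) follows from (i) and (iv) together with an analysis of the club structure. \textbf{(ii)} If $X$ is cosmic, then $X$ is Lindel\"of and every compact subset is metrizable; here $\CC(X)$ need not be Fr\'echet, but $\CC(X)$ embeds (as a topological vector space, via a network argument) into a space built from $C(K)$'s with $K$ metrizable compact, and one runs the weakly-null-sequence characterization directly: a weakly convergent sequence in $\CC(X)$ or in its strong dual is, by the cosmic (hence $\aleph_0$-space) structure, eventually governed by the behavior on a single metrizable compact set, whence Grothendieck's theorem for $C(K)$ applies termwise.

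The main obstacle I anticipate is case \textbf{(ii)}, the cosmic case: unlike (i), (iii), (iv), there is no clean limit presentation of $\CC(X)$ as a countable projective limit of $C(K)$'s, and $\CC(X)$ is typically neither metrizable nor barrelled, so neither the Fr\'echet machinery of \cite{ABR} nor a naive factorization is directly available. The right tool is presumably the fact that a cosmic space is an $\aleph_0$-space, so $\CC(X)$ has a countable $cs^*$-network (or is itself cosmic/$\aleph_0$ as a topological space), which pins down convergent sequences; combined with the fact that on each compact $K \subseteq X$ the space $C(K)$ has $(DP)$, one should be able to verify the sequential characterization of Definition~\ref{def:sDP} and then Definition~\ref{def:DP} directly. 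A secondary technical point, recurring in (i) and (iv), is that the bonding maps in the projective limit are not surjective, so one works with the reduced limit and must confirm that closed linear subspaces of products of $C(K)$'s that arise this way still satisfy the hypotheses needed to inherit $(DP)$ — this is where one should be careful rather than routine. Once these points are handled, assembling (iii) from (i) and (iv) is bookkeeping on ordinals, and the passage from $(DP)$ to $(sDP)$ is quoted from \cite[Corollary~3.4, Proposition~3.3]{ABR} in the Fr\'echet/strict-$(LF)$ cases and verified by hand in case (ii).
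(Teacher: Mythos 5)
Your overall instinct---reduce everything to Grothendieck's theorem for $C(K)$ with $K$ compact---is the right one, and your treatment of case (iv) (topological sum, $\CC(X)=\prod_i\CC(X_i)$, countable support of dual sequences) matches the paper. But there are two genuine gaps.

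First, the reduction mechanism you propose for (i) and (ii) does not work as stated. Writing $\CC(X)=\varprojlim_n C(K_n)$ realizes $\CC(X)$ only as a closed, generally uncomplemented, subspace of $\prod_n C(K_n)$, and the $(DP)$ property is notoriously not inherited by closed subspaces (already $\ell_2\subseteq C[0,1]$ fails it), so the ``standard factorization argument'' through the factors is not available. The paper's actual workhorse (Theorem \ref{t:sequential-DP-c0-quasibarrelled}) runs in the opposite direction: if $\CC(X)$ is $c_0$-barrelled, a weakly null sequence $\{\mu_n\}$ in $M_c(X)_\beta$ is weak-$\ast$ null, hence equicontinuous, hence there is a \emph{single} compact $K\subseteq X$ with $\supp(\mu_n)\subseteq K$ for all $n$; Lemma \ref{l:M(K)-M(X)} then identifies the relevant measures with $M(K)$ and Grothendieck's theorem for the one Banach space $C(K)$ gives $(sDP)$. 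Your sketch never isolates this ``one compact set swallows the whole dual sequence'' step, and without it the termwise appeal to $C(K)$ has no single $K$ to work on. Moreover the passage from $(sDP)$ to $(DP)$ is Proposition \ref{p:DP-LF-DF} only in the Fr\'echet case (i); in case (ii), where $\CC(X)$ is barrelled but not metrizable, one needs Proposition \ref{p:DP-c0-quasibarrelled} together with weak sequential angelicity of both $\CC(X)$ and $M_c(X)_\beta$ (obtained from metrizability of compacta in $C_p(X)$ and from separability of $\CC(X)$), which your proposal does not supply.

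Second, case (iii) cannot be assembled from (i) and (iv). For $\kappa$ of uncountable cofinality (e.g.\ $\kappa=\omega_1$) the space $[0,\kappa)$ is neither hemicompact nor paracompact nor cosmic, and $\CC\big([0,\kappa)\big)$ is not even barrelled---it is $c_0$-barrelled but not barrelled, cf.\ Proposition \ref{p:Ck-ordinal-c0-barrelled} and the Nachbin--Shirota theorem---so no decomposition ``along the club structure'' lands you back in the earlier cases. The paper needs a separate argument here: $c_0$-barrelledness (the supports of a weak-$\ast$ null sequence of measures all lie in some $[0,\alpha]$ with $\alpha<\kappa$) yields $(sDP)$, and $(DP)$ then requires a genuine contradiction argument via Edwards' criterion (Theorem \ref{t:Edwards-DP}), a uniform-discreteness lemma, and two applications of the Eberlein--\v{S}mulian theorem on initial segments $[0,\alpha]$ and $[0,\beta]$. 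This is precisely the step your proposal dismisses as bookkeeping.
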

Since  every compact subset of a cosmic space is metrizable, it is easy to see that indeed all four classes (i)-(iv) of Tychonoff spaces are independent (in the sense that there are spaces which belong to one of the classes but do not belong to other classes). In particular, the spaces  $\CC(\NN^\NN)$ and $\CC(\mathbb{Q})$ are  non-Fr\'{e}chet spaces with the $(DP)$ property and the $(sDP)$ property.

For the Grothendieck property, essentially using  Theorem \ref{t:DP-Cp} we prove the following result
(the definitions of $\mu$-spaces and sequential spaces are given before the proof of this result).

\begin{theorem} \label{t:Grothendieck-Ck}
Let $X$ be a Tychonoff space.
\begin{enumerate}
\item[{\rm (i)}] If $X$ is a $\mu$-space whose compact subsets are sequential (for example, $X$ is cosmic), then $\CC(X)$ has the Grothendieck property if and only if every functionally bounded subset of $X$ is finite.
\item[{\rm (ii)}]  If $X$ is a sequential space, then $\CC(X)$ has the Grothendieck property if and only if $X$ is discrete.
\end{enumerate}
\end{theorem}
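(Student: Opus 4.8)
The plan is to reduce the whole statement to a single \emph{obstruction lemma}: if a Tychonoff space $X$ contains a compact subspace $T$ homeomorphic to the convergent sequence $\omega+1$, then $\CC(X)$ does \emph{not} have the Grothendieck property. Granting this lemma, the easy converses are immediate. If every functionally bounded subset of $X$ is finite, then every compact subset of $X$ is finite (a compact set is functionally bounded), so the compact-open and pointwise topologies on $C(X)$ coincide; hence $\CC(X)=C_p(X)$, which has the Grothendieck property by Theorem~\ref{t:DP-Cp}(iii). A discrete space has this property, since an infinite subset of it carries an unbounded continuous function; thus the ``if'' part of (ii) is a special case of the ``if'' part of (i). (When all compact sets are finite the hypotheses of (i) hold automatically, so nothing is lost.)

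For the nontrivial directions I argue contrapositively. In (i), suppose $X$ carries an infinite functionally bounded set $A$. Since $X$ is a $\mu$-space, $K:=\overline A$ is compact, and it is infinite; as a compact subset of $X$ it is sequential, hence not discrete, so it has a non-isolated point $x$; then $K\setminus\{x\}$ is not closed in $K$, and by sequentiality some sequence in $K\setminus\{x\}$ converges to $x$. Passing to a subsequence of pairwise distinct terms (an eventually constant subsequence would force its value to be $x$), we obtain a compact subspace $T=\{x_n:n\in\NN\}\cup\{x\}$ of $X$ homeomorphic to $\omega+1$ (a continuous bijection from the compact space $\omega+1$ onto the Hausdorff space $T$ is a homeomorphism). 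By the obstruction lemma $\CC(X)$ is not Grothendieck. In (ii), if $X$ is sequential and not discrete, the same extraction performed inside $X$ itself (starting from $X\setminus\{x\}$, which is not closed for some $x$) produces such a $T$, and again $\CC(X)$ is not Grothendieck.

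To prove the obstruction lemma I would use the restriction operator $R\colon\CC(X)\to\CC(T)=C(T)$, $Rf=f\uhr T$. It is linear and continuous; it is \emph{surjective}, since every $g\in C(T)$ is continuous on the compact, hence closed, set $T\subseteq\beta X$ and extends, by the norm-preserving Tietze theorem, to $\tilde g\in C(\beta X)$ with $\|\tilde g\|_{\beta X}=\|g\|_T$, whose restriction to $X$ extends $g$; it is \emph{open}, because for compact $L\subseteq X$ the image $R\bigl(\{f:|f|<\eps\text{ on }L\}\bigr)$ contains $\{g\in C(T):\|g\|_T<\eps\}$ (use the same extension); and, crucially, for each $M>0$ it maps the set $B_M:=\{f\in C(X):\|f\|_X\le M\}$ --- which is bounded in $\CC(X)$, as $\|f\|_L\le M$ on every compact $L$ --- \emph{onto} the closed ball $M\,\ball_{C(T)}$. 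Since $T\cong\omega+1$, $C(T)\cong c$ is a Banach space with strong dual $(\ell^1,\|\cdot\|_1)$, and these three properties make $R'$ a topological isomorphism of $C(T)'_\beta$ onto the subspace $(\ker R)^\perp$ of $\CC(X)'_\beta$: $R'$ is continuous, openness of $R$ gives $R'(C(T)')=(\ker R)^\perp$, which is $\weakstar$-closed hence $\beta$-closed, and $R(B_M)=M\,\ball_{C(T)}$ shows the norm topology of $C(T)'$ is induced from $\CC(X)'_\beta$. Hence, by the Hahn--Banach theorem, the transpose of $R'$ --- the restriction map $(\CC(X)'_\beta)'\to(C(T)'_\beta)'=\ell^\infty$ --- is \emph{surjective}. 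Now it is classical that $c=C(\omega+1)$ fails the Grothendieck property: writing $T=\{t_n:n\in\NN\}\cup\{t_\infty\}$ with $t_n\to t_\infty$, the functionals $\chi_n:=\delta_{t_n}-\delta_{t_\infty}\in C(T)'$ form a $\weakstar$-null sequence ($\chi_n(g)=g(t_n)-g(t_\infty)\to0$), but pairing with $\Psi\in\ell^\infty$ equal to $1$ on each $t_n$ and to $0$ at $t_\infty$ gives $\Psi(\chi_n)=1$, so $(\chi_n)$ is not weakly null. Lifting to $\CC(X)$: the sequence $(R'\chi_n)$ is $\weakstar$-null in $\CC(X)'$ (transpose of a continuous operator), while for any $\Phi\in(\CC(X)'_\beta)'$ restricting to $\Psi$ one has $\Phi(R'\chi_n)=\Psi(\chi_n)=1\not\to0$; thus $(R'\chi_n)$ is $\weakstar$-null but not weakly null in $\CC(X)'_\beta$, i.e.\ $\CC(X)$ is not a Grothendieck space. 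This proves the lemma.

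The main obstacle is exactly this duality computation in the obstruction lemma: one must check that the restriction map $R$ is tame enough --- surjective, open, and above all sending a suitable bounded set of $\CC(X)$ \emph{onto} a ball of $C(T)$ --- for the Grothendieck property to descend to the Banach quotient $C(T)\cong c$ through the dual and bidual maps. Everything else is routine: extracting a copy of $\omega+1$ from an infinite compact sequential space or from a non-isolated point of a sequential space, the coincidence $\CC(X)=C_p(X)$ when all compacta are finite, the appeal to Theorem~\ref{t:DP-Cp}(iii), and the textbook fact that $c$ is not a Grothendieck space. Finally, since every cosmic space is a $\mu$-space whose compact subsets are metrizable --- hence sequential --- part (i) does apply to all cosmic $X$, as claimed in the statement.
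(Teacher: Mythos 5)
Your proposal is correct and follows essentially the same route as the paper: your ``obstruction lemma'' is the paper's Lemma~\ref{l:Grothendieck-sequence} (a non-trivial convergent sequence, i.e.\ a copy of $\omega+1$, destroys the Grothendieck property), your verification that the restriction operator $R$ embeds $C(T)'_\beta=\ell^1$ topologically into $\CC(X)'_\beta$ is exactly the paper's Lemma~\ref{l:M(K)-M(X)}, and the witnessing data ($\delta_{x_n}-\delta_x$ paired via Hahn--Banach with the summing functional $\mu\mapsto\sum_n\mu(\{x_n\})$) coincide with the paper's. The remaining reductions (extracting the sequence from an infinite compact sequential set, resp.\ from a non-isolated point, and using $\CC(X)=C_p(X)$ together with Theorem~\ref{t:DP-Cp}(iii) for the converses) are also the same.
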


Note that the condition on $X$ of being a sequential space in (ii) of Theorem \ref{t:Grothendieck-Ck} cannot be replaced by the condition ``$X$ is a $k$-space'',  as the compact space $\beta\NN$ shows.
We prove Theorems \ref{t:DP-Ck} and \ref{t:Grothendieck-Ck} in Section \ref{sec:Ck-GDP}.



\section{The $(DP)$ property and the Grothendieck property for $C_p(X)$} \label{sec:Cp-GDP}


In what follows we shall use the following result.
\begin{theorem}[\protect{\cite[Theorem~9.3.4]{Edwards}}] \label{t:Edwards-DP}
An lcs $E$ has the $(DP)$ property if and only if every absolutely convex, weakly compact subset of $E$ is precompact for the topology $\tau_{\Sigma'}$ of uniform convergence on the absolutely convex, equicontinuous, weakly compact subsets of $E'_\beta$.
\end{theorem}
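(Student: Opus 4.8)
The statement to be proved is the equivalence: an lcs $E$ has the $(DP)$ property iff every absolutely convex, weakly compact subset of $E$ is precompact for the topology $\tau_{\Sigma'}$ of uniform convergence on the absolutely convex, equicontinuous, weakly compact subsets of $E'_\beta$. My plan is to prove both implications by reformulating the $(DP)$ property in terms of a single canonical operator, namely the identity-type map into the completion of $(E,\tau_{\Sigma'})$, and then invoking that (as noted after Definition \ref{def:DP}) it suffices to test the $(DP)$ property against Banach-space-valued operators, or equivalently against operators whose range is quasi-complete.

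\emph{The easier direction ($\tau_{\Sigma'}$-precompactness $\Rightarrow$ $(DP)$).} Let $T:E\to F$ be continuous with $F$ quasi-complete and suppose $T$ sends bounded sets to relatively weakly compact sets. First I would show that such a $T$ is automatically continuous from $(E,\tau_{\Sigma'})$ into $F$: the adjoint $T':F'\to E'$ maps the bounded subsets of $F'$ (in particular equicontinuous, hence weak-$*$ relatively compact sets) into subsets of $E'$ that are equicontinuous; moreover the hypothesis on $T$ forces $T'$ to carry weak-$*$ convergent bounded sequences to sets that are weakly compact (this is where the Gantmacher-type argument and the relative weak compactness of the images of bounded sets enter), so $T'$ maps a defining family of $\Sigma$-sets in $F'$ into absolutely convex, equicontinuous, weakly compact subsets of $E'_\beta$, which are exactly the sets defining $\tau_{\Sigma'}$. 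Hence $T$ is $\tau_{\Sigma'}$-to-original continuous. Now take an absolutely convex weakly compact $K\subseteq E$. By the hypothesis $K$ is $\tau_{\Sigma'}$-precompact, so $T(K)$ is precompact in $F$; but $T(K)$ is also contained in a weakly compact (hence closed and, using quasi-completeness, complete) set, whence $T(K)$ is relatively compact. This gives the $(DP)$ property.

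\emph{The harder direction ($(DP)$ $\Rightarrow$ $\tau_{\Sigma'}$-precompactness).} This is the main obstacle, and the natural strategy is to apply the $(DP)$ property to a universal target. Let $\widehat{E}_{\Sigma'}$ be the completion (or quasi-completion) of $(E,\tau_{\Sigma'})$ and let $j:E\to \widehat{E}_{\Sigma'}$ be the canonical map; $\widehat{E}_{\Sigma'}$ is quasi-complete by construction. The key point is that $j$ maps bounded subsets of $E$ into relatively weakly compact subsets of $\widehat{E}_{\Sigma'}$: a bounded set $B\subseteq E$ is, by the bipolar theorem, contained in the bipolar $B^{\circ\circ}$ taken in $E''$, and one shows that the closure of $j(B)$ in $\widehat{E}_{\Sigma'}$ sits inside a weakly compact set because the dual of $(E,\tau_{\Sigma'})$ is the union of the $\Sigma$-sets, each of which is weakly compact in $E'_\beta$, so polars of bounded sets are weak-$*$ compact in a way that makes $\overline{j(B)}$ weakly compact (here one invokes the description of $(\widehat E_{\Sigma'})'$ and Grothendieck's completeness theorem). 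Granting this, the $(DP)$ property applied to $T=j$ yields that every absolutely convex weakly compact $K\subseteq E$ has $j(K)$ relatively compact in $\widehat{E}_{\Sigma'}$, i.e.\ $K$ is $\tau_{\Sigma'}$-precompact, as required.

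I expect the technical crux to be verifying that bounded sets of $E$ have relatively weakly compact image under $j$ — this requires care about which dual pairing is in play and an appeal to a completeness/Grothendieck theorem to identify $(\widehat E_{\Sigma'})'$ and to recognize the relevant weak compactness. The continuity bookkeeping in the easy direction (that a $T$ as in Definition \ref{def:DP} is automatically $\tau_{\Sigma'}$-continuous) is routine duality theory but should be spelled out, since it is exactly what links the abstract operator condition to the concrete topology $\tau_{\Sigma'}$. Finally, the reduction to quasi-complete (equivalently Banach) targets, already recorded after Definition \ref{def:DP}, is what makes the universal-target argument legitimate.
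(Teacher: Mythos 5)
The paper does not prove this statement at all --- it is quoted verbatim from Edwards \cite[Theorem~9.3.4]{Edwards} and used as a black box --- so there is no internal proof to compare against. Your plan is, in substance, the standard proof of Edwards' theorem, and its architecture is sound: both directions reduce to the duality between ``$T$ carries bounded sets into relatively weakly compact sets'' and ``$T'$ carries equicontinuous subsets of $F'$ into relatively $\sigma(E',E'')$-compact subsets of $E'$'' (the Grothendieck--Gantmacher characterization), together with the universal map $j\colon E\to\widehat E_{\Sigma'}$. Two remarks. In the easy direction, the phrase ``carries weak-$*$ convergent bounded sequences to weakly compact sets'' is the wrong formulation; what you actually need (and what works) is that for a closed absolutely convex neighborhood $U$ of $0$ in $F$ the set $T'(U^\circ)$ is absolutely convex, equicontinuous and $\sigma(E',E'')$-compact, so that $T^{-1}(U)\supseteq\bigl(T'(U^\circ)\bigr)^\circ$ exhibits $\tau_{\Sigma'}$-continuity of $T$. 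In the hard direction, the crux you flag --- that $j(B)$ is relatively weakly compact in $\widehat E_{\Sigma'}$ for $B$ bounded --- is closed by one concrete lemma you do not name: on each absolutely convex, equicontinuous, weakly compact subset $A$ of $E'_\beta$ the topologies $\sigma(E',E)$ and $\sigma(E',E'')$ coincide (the identity from the compact $(A,\sigma(E',E''))$ onto the Hausdorff $(A,\sigma(E',E))$ is a homeomorphism). Consequently every $\phi\in E''$ restricts to a $\sigma(E',E)$-continuous function on each such $A$, so by Grothendieck's completeness theorem the bipolar $B^{\circ\circ}$, taken in $E''=(E'_\beta)'$ and $\sigma(E'',E')$-compact by Alaoglu (as $B^\circ$ is a $\beta(E',E)$-neighborhood of $0$), lies inside $\widehat E_{\Sigma'}$; since $(\widehat E_{\Sigma'})'=E'$ (the family $\Sigma'$ consists of absolutely convex $\sigma(E',E)$-compact sets covering $E'$, so $\tau_{\Sigma'}$ lies between $\sigma(E,E')$ and $\mu(E,E')$), the weak topology of $\widehat E_{\Sigma'}$ agrees with $\sigma(E'',E')$ on $B^{\circ\circ}$, and $\overline{j(B)}$ is weakly compact. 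With these two points made explicit your argument is complete and correct.
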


A subset $A$ of a topological space $X$ is called {\em sequentially compact} if every sequence in $A$ contains a subsequence which is convergent in $A$. Following \cite{Gabr-free-resp}, a  Tychonoff space $X$ is {\em sequentially angelic} if a subset $K$ of $X$ is compact if and only if $K$ is sequentially compact. It is clear that every angelic space is sequentially angelic. An lcs $E$ is {\em weakly sequentially angelic} if the space $E_w$ is sequentially angelic, where $E_w$ denotes the space $E$ endowed with the weak topology. The following  proposition is proved in (ii) of Proposition~3.3 of \cite{ABR} (the condition of being quasi-complete is not used in the proof of this clause).

\begin{proposition}[\cite{ABR}] \label{p:DP-c0-quasibarrelled}
Assume that an lcs $E$ satisfies the following conditions:
\begin{enumerate}
\item[{\rm (i)}] $E$ has the $(sDP)$ property;
\item[{\rm (ii)}] both $E$ and $E'_\beta$ are weakly sequentially angelic.
\end{enumerate}
Then $E$ has the $(DP)$ property.
\end{proposition}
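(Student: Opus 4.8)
The plan is to reduce the assertion to the $(sDP)$ property via the criterion of Theorem~\ref{t:Edwards-DP}. By that theorem, $E$ has the $(DP)$ property as soon as every absolutely convex, weakly compact subset $W$ of $E$ is $\tau_{\Sigma'}$-precompact, where $\tau_{\Sigma'}$ is the topology of uniform convergence on the absolutely convex, equicontinuous, weakly compact subsets of $E'_\beta$. This topology is generated by the seminorms $p_H(x):=\sup_{\chi\in H}|\chi(x)|$, with $H$ ranging over such subsets of $E'_\beta$, and, as is standard, precompactness for $\tau_{\Sigma'}$ can be checked one generating seminorm at a time. Hence it suffices to show: for every absolutely convex, equicontinuous, weakly compact $H\subseteq E'_\beta$, the set $W$ is totally bounded for $p_H$. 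I will argue by contradiction.

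Suppose $W$ is not $p_H$-totally bounded. Then there are $\eps>0$ and a sequence $(x_n)_{n\in\NN}$ in $W$ with $p_H(x_n-x_m)>\eps$ for all $n\neq m$. Because $W$ is weakly compact and, by hypothesis~(ii), $E$ is weakly sequentially angelic, $W$ is weakly sequentially compact; passing to a subsequence we may assume $x_n\to x$ in $\sigma(E,E')$ for some $x\in W$. Put $u_n:=x_{2n}-x_{2n-1}$. Then $u_n\to 0$ in $\sigma(E,E')$, while $p_H(u_n)>\eps$ for every $n$, so we may choose $\chi_n\in H$ with $|\chi_n(u_n)|>\eps$.

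Next I exploit the angelicity of the dual. The set $H$ is weakly compact in $E'_\beta$ and, by hypothesis~(ii) again, $E'_\beta$ is weakly sequentially angelic, so $H$ is weakly sequentially compact; passing to a further subsequence (and relabelling the corresponding subsequence of $(u_n)$ accordingly) we may assume $\chi_n\to\chi$ in $\sigma(E',E'')$ for some $\chi\in H$. Set $\psi_n:=\chi_n-\chi$, a weakly null sequence in $E'_\beta$. Then
\[
\chi_n(u_n)=\psi_n(u_n)+\chi(u_n),
\]
where $\chi(u_n)\to 0$ since $\chi\in E'$ and $u_n\to 0$ weakly in $E$, and $\psi_n(u_n)\to 0$ by the $(sDP)$ property~(i) applied to the weakly null sequence $(u_n)$ in $E$ and the weakly null sequence $(\psi_n)$ in $E'_\beta$. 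Hence $\chi_n(u_n)\to 0$, contradicting $|\chi_n(u_n)|>\eps$. This contradiction shows that $W$ is $p_H$-totally bounded for each admissible $H$, so $W$ is $\tau_{\Sigma'}$-precompact, and Theorem~\ref{t:Edwards-DP} gives that $E$ has the $(DP)$ property.

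The bookkeeping points --- that weak compactness upgrades to weak sequential compactness for both $W$ and $H$, that $u_n$ stays weakly null after the extractions, and that $\tau_{\Sigma'}$-precompactness follows from $p_H$-total boundedness seminorm by seminorm --- are routine. The genuine use of hypothesis~(ii) in its full strength is the double extraction: weak sequential angelicity of $E$ produces the weakly null sequence $(u_n)$ out of the failure of precompactness, while weak sequential angelicity of $E'_\beta$ produces the weakly null sequence $(\psi_n)$ that lets the $(sDP)$ property close the argument. I expect this interplay, rather than any analytic estimate, to be the conceptual core; without angelicity of $E'_\beta$ one would be stuck with a sequence $(\chi_n)$ in $H$ admitting no weakly convergent subsequence.
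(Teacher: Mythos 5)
Your proof is correct; the paper itself does not prove this proposition (it is quoted from \cite{ABR}, with the remark that quasi-completeness is not needed), and your argument --- reducing to $\tau_{\Sigma'}$-precompactness via Theorem~\ref{t:Edwards-DP}, extracting a $p_H$-separated sequence, and using the two angelicity hypotheses to produce weakly null sequences $(u_n)$ in $E$ and $(\chi_n-\chi)$ in $E'_\beta$ to which the $(sDP)$ property applies --- is exactly the standard proof from \cite{ABR}, and is the same technique the authors themselves deploy in the proof of Theorem~\ref{t:DP-Ck}(iii). All the bookkeeping you flag (seminorm-by-seminorm total boundedness via the initial uniformity, relabelling after the second extraction) is indeed routine and correctly handled.
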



Also we shall use repeatedly the next assertion, see \cite[Corollary~3.4]{ABR}.
\begin{proposition}[\cite{ABR}] \label{p:DP-LF-DF}
Let $E$ be a complete $(LF)$-space. Then $E$ has the $(DP)$ property if and only if it has the $(sDP)$ property.
\end{proposition}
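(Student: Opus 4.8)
The plan is to prove the two implications separately and to reduce the nontrivial one to Proposition~\ref{p:DP-c0-quasibarrelled}.

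\emph{The implication $(DP)\Rightarrow(sDP)$.} Here I would use only general locally convex machinery. A complete $(LF)$-space $E$ is barrelled, being a countable inductive limit of the Fr\'echet (hence barrelled) steps $E_n$, and it is quasi-complete since it is complete. Hence the fact recalled in the Introduction, namely clause~(i) of \cite[Proposition~3.3]{ABR} (a barrelled quasi-complete space with the $(DP)$ property has the $(sDP)$ property), applies directly and gives the conclusion. One could also argue by hand --- for weakly null $(x_n)$ in $E$ and weakly null $(\chi_n)$ in $E'_\beta$, the set $\{\chi_n\}$ is $\beta(E',E)$-bounded, hence equicontinuous by barrelledness, so $x\mapsto(\chi_k(x))_k$ is a well-defined continuous operator $E\to c_0$ --- but pushing this through still needs the angelicity input below, so invoking \cite{ABR} is the economical route.

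\emph{The implication $(sDP)\Rightarrow(DP)$.} Since $E$ is assumed to have the $(sDP)$ property, by Proposition~\ref{p:DP-c0-quasibarrelled} it suffices to check that $E$ and $E'_\beta$ are both weakly sequentially angelic. For this I would invoke the Cascales--Orihuela theory of the class $\GG$. First, $E$ is a countable inductive limit of Fr\'echet (in particular metrizable) spaces; every metrizable locally convex space belongs to $\GG$, and $\GG$ is stable under countable inductive limits, so $E\in\GG$. Secondly, each step dual $(E_n)'_\beta$ is a $(DF)$-space, hence lies in $\GG$; using that a complete $(LF)$-space is regular, one identifies $E'_\beta$ with the projective limit $\varprojlim_n (E_n)'_\beta$ along the restriction maps, and then stability of $\GG$ under countable projective limits gives $E'_\beta\in\GG$. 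Finally, every locally convex space in $\GG$ is weakly angelic, and a weakly angelic space is \emph{a fortiori} weakly sequentially angelic; thus both hypotheses of Proposition~\ref{p:DP-c0-quasibarrelled} hold and $E$ has the $(DP)$ property.

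\emph{Main obstacle.} The one genuinely delicate point is the identification $E'_\beta\cong\varprojlim_n (E_n)'_\beta$: it requires that the bounded subsets of $E$ be controlled by the steps $E_n$ (a regularity property of the $(LF)$-space), and this is precisely where the completeness hypothesis is used in an essential way. For a strict $(LF)$-space --- and in particular for a Fr\'echet space, which is the case actually needed in the applications of this proposition --- regularity is automatic and the argument is clean; in the general complete case one must appeal to the relevant regularity theorem for $(LF)$-spaces. All the remaining ingredients --- barrelledness, the membership of metrizable and $(DF)$-spaces in $\GG$, the permanence properties of $\GG$, and the passage from weak angelicity to weak sequential angelicity --- are standard and feed directly into Proposition~\ref{p:DP-c0-quasibarrelled}.
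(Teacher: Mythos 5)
The paper offers no proof of this proposition at all: it is imported verbatim as \cite[Corollary~3.4]{ABR}, and is stated immediately after Proposition~\ref{p:DP-c0-quasibarrelled} precisely because the latter is its engine. Your reconstruction is correct and follows the same route as the cited source: $(DP)\Rightarrow(sDP)$ from barrelledness plus quasi-completeness via \cite[Proposition~3.3(i)]{ABR}, and $(sDP)\Rightarrow(DP)$ via Proposition~\ref{p:DP-c0-quasibarrelled} once $E$ and $E'_\beta$ are placed in the Cascales--Orihuela class $\GG$ and hence shown to be weakly angelic. The one step you flag as delicate does go through: a complete $(LF)$-space is locally complete, so every closed bounded disc $B$ spans a Banach space $E_B$, and Grothendieck's factorization theorem localizes the continuous inclusion $E_B\hookrightarrow E$ in some step $E_n$; hence $E$ is regular and the identification $E'_\beta\cong\varprojlim_n (E_n)'_\beta$ you need is legitimate (for the Fr\'echet and strict $(LF)$ cases actually used in this paper it is, as you say, automatic).
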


\begin{proposition} \label{p:sDP-strong-dual-Schur}
Let $E$ be a locally complete lcs whose every separable bounded set is metrizable. Assume that $E$ is weakly sequentially angelic and  does not contain an isomorphic copy of $\ell_1$. Then $E$ has the $(sDP)$ property if and only if the strong dual $E'_\beta$ has the Schur property.
\end{proposition}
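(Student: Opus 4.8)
The plan is to establish the two implications separately, using the characterization of the $(sDP)$ property via weakly null sequences together with the hypotheses on $E$. Throughout I will use that, since $E$ is weakly sequentially angelic, weakly convergent sequences in $E$ are exactly the sequences whose range is relatively weakly compact and which converge weakly in the usual sense; and since $E$ is locally complete with separable bounded sets metrizable, the Mackey and strong duals behave well enough that bounded sets of $E$ are "small" in the needed sense.

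\emph{($E'_\beta$ has Schur $\Rightarrow$ $E$ has $(sDP)$).} Suppose $\{x_n\}_{n\in\NN}$ is weakly null in $E$ and $\{\chi_n\}_{n\in\NN}$ is weakly null in $E'_\beta$. A weakly null sequence in $E'_\beta$ is in particular bounded in $E'_\beta$, hence equicontinuous, so $\{\chi_n\}$ is an equicontinuous sequence; since $E'_\beta$ has the Schur property, $\{\chi_n\}$ is even norm-null in $E'_\beta$, i.e. $\chi_n\to 0$ uniformly on each bounded subset of $E$. Now $\{x_n\}$, being weakly convergent, is bounded in $E$, so $\sup_n|\chi_n(x_n)| \le \sup_{x\in B}|\chi_n(x)| \to 0$, where $B$ is the bounded set $\{x_n : n\in\NN\}$. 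Hence $\lim_n \chi_n(x_n)=0$ and $E$ has the $(sDP)$ property. This direction does not even need the hypothesis that $E$ omits $\ell_1$ or is weakly sequentially angelic.

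\emph{($E$ has $(sDP)$ $\Rightarrow$ $E'_\beta$ has Schur).} This is the substantive direction, and the main obstacle is to convert a weak-null sequence in $E'_\beta$ that fails to be norm-null into a pair of weakly null sequences witnessing the failure of $(sDP)$. Suppose $\{\chi_n\}$ is weakly null in $E'_\beta$ but not norm-null; passing to a subsequence, there are $\eps>0$ and a bounded set $B\subseteq E$ with $\sup_{x\in B}|\chi_n(x)|\ge\eps$ for all $n$, so choose $x_n\in B$ with $|\chi_n(x_n)|\ge\eps$. The sequence $\{x_n\}$ lies in the bounded, hence (after replacing $B$ by its separable closed absolutely convex hull and using local completeness) metrizable and complete set; by weak sequential angelicity I would like to extract a weakly convergent subsequence of $\{x_n\}$, but boundedness alone does not give relative weak compactness. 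This is exactly where the hypothesis that $E$ contains no isomorphic copy of $\ell_1$ enters: by Rosenthal's $\ell_1$-theorem applied inside the separable space $\overline{\spn}\{x_n\}$, either a subsequence of $\{x_n\}$ is equivalent to the $\ell_1$-basis (excluded), or a subsequence of $\{x_n\}$ is weakly Cauchy. So, passing to a further subsequence, $\{x_n\}$ is weakly Cauchy; then the differences $y_n := x_{2n}-x_{2n-1}$ form a weakly null sequence in $E$. Correspondingly set $\psi_n := \chi_{2n}$ (still weakly null in $E'_\beta$). The difficulty is that $|\chi_{2n}(x_{2n})|\ge\eps$ need not survive when we pass to $y_n = x_{2n}-x_{2n-1}$, because $\chi_{2n}(x_{2n-1})$ may be large; to handle this I would run a standard gliding-hump / diagonal argument: since $\{\chi_n\}$ is weak-null, for each fixed $m$ we have $\chi_n(x_m)\to 0$, so after a suitable subsequence extraction (choosing $\chi$'s and $x$'s alternately, \`a la the Schreier/gliding-hump construction) one arranges $|\chi_{n_k}(x_{m_k})|\ge\eps$ while $|\chi_{n_k}(x_{m_j})|$ is negligibly small for $j<k$; then $y_k := x_{m_k} - x_{m_{k-1}}$ is weakly null (weak-Cauchy differences) and $\psi_k := \chi_{n_k}$ is weakly null, yet $|\psi_k(y_k)| \ge \eps/2$ for large $k$, contradicting the $(sDP)$ property. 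Hence $\{\chi_n\}$ must be norm-null, i.e. $E'_\beta$ has the Schur property.

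The one technical point to be careful about is the interchange of "norm-null" and "strongly null" for sequences in $E'_\beta$ when $E$ is only a general lcs: here "the Schur property of $E'_\beta$" means that $weak(E'_\beta,E'')$-null sequences are $\beta(E',E)$-null, and a $\beta(E',E)$-null sequence is precisely one converging to $0$ uniformly on bounded subsets of $E$, which is what both halves of the argument use; metrizability of separable bounded sets and local completeness of $E$ are what guarantee Rosenthal's theorem is applicable and that the relevant separable subspaces are Banach-like enough for the $\ell_1$-dichotomy. I would state these reductions as a short lemma and then assemble the two implications as above.
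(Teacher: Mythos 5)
Your argument is correct in outline but follows a genuinely different route from the paper's, which disposes of both directions by citation. For the converse implication the authors invoke Proposition 3.1(ii) of Gabriyelyan's paper on Schur type properties, and your first paragraph is exactly that argument (note, though, that your parenthetical ``bounded in $E'_\beta$, hence equicontinuous'' is false for a general lcs --- that is quasi-barrelledness --- but it is fortunately unused: the Schur property alone gives uniform convergence on bounded sets). For the forward implication the paper first shows that the $(sDP)$ property forces weak-null sequences in $E'$ to be null for the Mackey topology $\mu(E',E)$ (Proposition 3.3(i) of the same reference, which is where weak sequential angelicity is consumed), and then applies Ruess's Theorem 2.1: in a locally complete space with metrizable separable bounded sets not containing $\ell_1$, every $\mu(E',E)$-null sequence is $\beta(E',E)$-null. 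You bypass the Mackey topology entirely and run a direct gliding-hump argument; this is more self-contained and, as a bonus, never touches the weak sequential angelicity hypothesis.

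The one step you must tighten is the Rosenthal dichotomy. The space $\overline{\spn}\{x_n\}$ with the topology induced from $E$ is not a Banach space, so the classical theorem does not apply to it verbatim. The correct move is to let $D$ be the closed absolutely convex hull of $\{x_n\}$: by local completeness $D$ is a Banach disk, the normed space $E_D$ is Banach and separable on the closed span of $\{x_n\}$, and Rosenthal applies there. This yields either a weakly Cauchy subsequence in $E_D$ (hence in $E$, since $E_D\hookrightarrow E$ is continuous --- the branch you want) or a subsequence equivalent to the $\ell_1$-basis for the $E_D$-norm. To exclude the latter via the hypothesis on $E$ you must upgrade an $\ell_1$-basic sequence for the $E_D$-norm to a topological copy of $\ell_1$ inside $E$ itself, and this transfer is exactly where the metrizability of separable bounded sets is needed; it is the nontrivial content of Ruess's locally convex Rosenthal theorem. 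So you have not eliminated the dependence on Ruess's results --- you have replaced the sequential-coincidence statement the authors cite by the $\ell_1$-dichotomy from the same paper, and you should cite or prove it. With that input supplied, the rest is sound: since $\chi_n(x)\to 0$ for each fixed $x$, one chooses $m_k>m_{k-1}$ with $|\chi_{m_k}(x_{m_{k-1}})|<\varepsilon/2$, and then $y_k:=x_{m_k}-x_{m_{k-1}}$ is weakly null (differences of a weakly Cauchy sequence), $\psi_k:=\chi_{m_k}$ is weakly null in $E'_\beta$, and $|\psi_k(y_k)|\geq\varepsilon/2$ contradicts the $(sDP)$ property.
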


\begin{proof}
Assume that $E$ has the $(sDP)$ property. Let $\{ \chi_{n}\}_{n\in\NN}$ be a $\sigma(E',E'')$-null sequence in $E'$. Then Proposition 3.3(i) 
of \cite{Gabr-free-resp} guarantees that $\chi_n \to 0$ in the Mackey topology $\mu(E',E)$ on $E'$. As $E$ does not contain an isomorphic copy of $\ell_1$, a result of Ruess \cite[Theorem~2.1]{ruess} 
asserts that $\chi_n \to 0$ in the strong topology. Thus $E'_\beta$ has the Schur property.
Conversely, if $E'_\beta$ has the Schur property, then $E$ has the $(sDP)$ property by Proposition 3.1(ii) 
of \cite{Gabr-free-resp}.
\end{proof}

If $E$ is a Fr\'{e}chet space, the necessity in the following corollary is Theorem 2.7 of \cite{albaneze}.
\begin{corollary} \label{c:Frechet-DP-L1-Schur}
Let $E$ be a strict  $(LF)$-space  not containing an isomorphic copy of $\ell_1$. Then $E$ has the $(DP)$ property if and only if the strong dual $E'_\beta$ has the Schur property.
\end{corollary}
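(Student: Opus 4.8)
The corollary follows by specializing Proposition~\ref{p:sDP-strong-dual-Schur} and Proposition~\ref{p:DP-LF-DF} to a strict $(LF)$-space $E$. The strategy is to verify that a strict $(LF)$-space satisfies the three standing hypotheses of Proposition~\ref{p:sDP-strong-dual-Schur} — local completeness, metrizability of separable bounded subsets, and weak sequential angelicity — and then to upgrade the resulting equivalence for the $(sDP)$ property to one for the $(DP)$ property via Proposition~\ref{p:DP-LF-DF}.

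\textbf{Step 1: reduce $(DP)$ to $(sDP)$.} First I would recall that a strict $(LF)$-space $E = \varinjlim E_n$ is complete (the strict inductive limit of a sequence of Fr\'echet spaces is complete), so Proposition~\ref{p:DP-LF-DF} applies and gives that $E$ has the $(DP)$ property if and only if it has the $(sDP)$ property. Thus it suffices to prove: $E$ has the $(sDP)$ property if and only if $E'_\beta$ has the Schur property.

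\textbf{Step 2: check the hypotheses of Proposition~\ref{p:sDP-strong-dual-Schur}.} A complete space is certainly locally complete. For the metrizability of separable bounded sets: every bounded subset of a strict $(LF)$-space is contained in (and bounded in) one of the steps $E_n$ by the standard Dieudonn\'e--Schwartz lemma; since $E_n$ is a Fr\'echet, hence metrizable, space, any bounded subset of $E$ — in particular any separable bounded subset — is metrizable in the topology induced from $E_n$, which coincides with the topology induced from $E$ by strictness. For weak sequential angelicity: each $E_n$ is Fr\'echet, hence (by the Eberlein--\v Smulian theorem in the weak topology, or the general fact that metrizable spaces are angelic and weak topologies of Fr\'echet spaces are angelic by a result of Grothendieck) weakly angelic; a weakly convergent sequence in $E$ lies in some step $E_n$ and converges there, and more generally a $\sigma(E,E')$-compact set, being bounded, sits in some $E_n$ and its weak topology agrees with $\sigma(E_n,E_n')$, so $E$ is weakly angelic and a fortiori weakly sequentially angelic. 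Finally $E$ does not contain a copy of $\ell_1$ by hypothesis. Hence Proposition~\ref{p:sDP-strong-dual-Schur} yields the desired equivalence, and combining with Step~1 completes the proof.

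\textbf{Main obstacle.} The only genuinely delicate point is the claim that bounded (resp.\ weakly compact) subsets of a strict $(LF)$-space are contained in a single step with agreeing induced topologies; this is exactly where strictness (as opposed to an arbitrary $(LF)$-structure) is used, and it is precisely this regularity property that lets one transport metrizability and angelicity from the Fr\'echet steps up to $E$. I expect no other difficulty: once the reduction of $(DP)$ to $(sDP)$ is in place and the three hypotheses are verified, the statement is immediate from the two cited propositions. I would then add the remark that in the Fr\'echet case this recovers Theorem~2.7 of \cite{albaneze}, as already noted before the statement.

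\begin{proof}
Since $E$ is a strict $(LF)$-space, it is complete, so by Proposition~\ref{p:DP-LF-DF} it has the $(DP)$ property if and only if it has the $(sDP)$ property. It therefore suffices to check that $E$ satisfies the hypotheses of Proposition~\ref{p:sDP-strong-dual-Schur}. Being complete, $E$ is locally complete. Write $E=\varinjlim_n E_n$ as a strict inductive limit of an increasing sequence of Fr\'echet spaces. By the Dieudonn\'e--Schwartz theorem every bounded subset $B$ of $E$ is contained in some $E_n$ and is bounded there, and by strictness the topology $E$ induces on $B$ coincides with the one induced by $E_n$; since $E_n$ is metrizable, $B$ is metrizable. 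In particular every separable bounded subset of $E$ is metrizable. The same argument shows that a $\sigma(E,E')$-compact subset $K$ of $E$, being bounded, lies in some $E_n$ with the weak topology of $E$ on $K$ agreeing with $\sigma(E_n,E_n')\uhr K$; as the Fr\'echet space $E_n$ is weakly angelic (Grothendieck), $K$ is weakly angelic, hence weakly sequentially angelic, and consequently $E$ is weakly sequentially angelic. Finally, by hypothesis $E$ contains no isomorphic copy of $\ell_1$. Thus Proposition~\ref{p:sDP-strong-dual-Schur} applies and gives that $E$ has the $(sDP)$ property if and only if $E'_\beta$ has the Schur property. Combining this with the first sentence finishes the proof.
\end{proof}
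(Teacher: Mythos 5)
Your proposal is correct and follows essentially the same route as the paper: reduce $(DP)$ to $(sDP)$ via Proposition~\ref{p:DP-LF-DF} and then verify the hypotheses of Proposition~\ref{p:sDP-strong-dual-Schur} (completeness, metrizability of bounded sets, weak angelicity). The only cosmetic difference is that you derive weak angelicity and metrizability of bounded sets by hand from the Dieudonn\'e--Schwartz localization of bounded sets in the steps, whereas the paper simply cites Cascales--Orihuela and P\'erez Carreras--Bonet for these standard facts.
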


\begin{proof}
The space $E$ is weakly angelic by a result of B.~Cascales and J.~Orihuela, see \cite[Proposition~11.3]{kak}. Also it is well known that $E$ is even complete whose every bounded set is metrizable, see \cite{bonet}. Taking into account that the $(DP)$ property is equivalent to the $(sDP)$ property in the class of strict $(LF)$-spaces by Proposition \ref{p:DP-LF-DF}, the assertion follows from Proposition \ref{p:sDP-strong-dual-Schur}.
\end{proof}

Let $X$ be a Tychonoff space. It is well known that the dual space of $C_p(X)$ is (algebraically) the linear space $L(X)$ of all linear combinations $\chi=a_1 x_1+\cdots + a_n x_n$, where $a_1,\dots,a_n$ are real numbers and $x_1,\dots,x_n\in X$. So
\[
\chi(f)=a_1 f(x_1)+\cdots + a_n f(x_n), \quad f\in C_p(X).
\]
If all the coefficients $a_1,\dots,a_n$ are nonzero and all $x_1,\dots,x_n$ are distinct, we set
\[
\supp(\chi):=\{ x_1,\dots,x_n\} \mbox{ and } \chi(x_i) := a_i, \; i=1,\dots,n.
\]
If $x\not\in \supp(\chi)$ we set $\chi(x):=0$.

We need the following proposition for which the statement  (i) is noticed on page 392 of \cite{FKS-feral} and  the case (ii) immediately follows from Theorem 5 or Theorem 10 of \cite{kakol}. Nevertheless, we provide  its complete and independent proof for the sake of completeness and reader convenience. Following \cite{FKS-feral},  an lcs $E$ is called {\em feral} if every infinite-dimensional subset of $E$ is unbounded. Recall that an lcs $E$ is called {\em $c_0$-barrelled} if every null sequence in the weak-$\ast$ dual of $E$ is equicontinuous, see \cite[Chapter~12]{Jar} or \cite[Chapter~8]{bonet}.

\begin{proposition} \label{p:Cp-strong-feral}
\begin{enumerate}
\item[{\rm (i)}]  The strong dual space $L_\beta(X)$ of $C_p(X)$ is feral.
\item[{\rm (ii)}] $C_p(X)$ is $c_0$-barrelled if and only if it is barrelled.
\end{enumerate}
\end{proposition}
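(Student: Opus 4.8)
The plan is to deduce (ii) first, as it is short and exhibits in miniature the mechanism used for (i). One implication is trivial, since every barrelled space is $c_0$-barrelled. For the converse, assume $C_p(X)$ is $c_0$-barrelled but not barrelled; by the Buchwalter--Schmets theorem the latter means that $X$ carries an infinite functionally bounded set $A$. Fix distinct points $x_n\in A$ and put $\chi_n:=\tfrac1n\,x_n\in L(X)$. Since $A$ is functionally bounded, $\chi_n(f)=f(x_n)/n\to 0$ for every $f\in C(X)$, so $(\chi_n)$ is a weak-$\ast$ null sequence in $L(X)$. On the other hand, every equicontinuous subset of $L(X)$ is contained in the polar of a basic neighbourhood $\{f:\ |f(y_i)|<\varepsilon,\ i\le k\}$, hence in the finite-dimensional subspace $\spn\{y_1,\dots,y_k\}$ of $L(X)$ (a functional killing the common zero set of finitely many evaluations lies in their span); as the $x_n$ are infinitely many distinct points, $(\chi_n)$ is not equicontinuous. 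This contradicts $c_0$-barrelledness, so every functionally bounded subset of $X$ is finite and, by Buchwalter--Schmets again, $C_p(X)$ is barrelled.

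For (i) it suffices to prove that every $\beta(L(X),C(X))$-bounded set $B\subseteq L(X)$ is finite-dimensional, i.e.\ that $S:=\bigcup_{\chi\in B}\supp\chi$ is finite. Suppose $S$ is infinite. Recalling that every infinite Hausdorff space contains an infinite discrete subspace, I may choose $\chi_n\in B$ and distinct $x_n\in\supp\chi_n$ so that $\{x_n:\ n\in\NN\}$ is relatively discrete in $X$, i.e.\ $x_n\notin\cl\{x_m:\ m\ne n\}$ for every $n$. By complete regularity pick $f_n\in C(X)$ with $0\le f_n\le 1$, $f_n(x_n)=1$ and $f_n\equiv0$ on the closed set $\cl\{x_m:\ m\ne n\}$, and set $g:=\sum_n 2^{-n}f_n\in C(X)$ (uniform convergence, $0\le g\le1$). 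Since $f_m(x_n)=0$ for $m\ne n$ while $f_n(x_n)=1$, we get $g(x_n)=2^{-n}$ for all $n$. Now the open sets $U_n:=g^{-1}\big((2^{-n-1},2^{-n+1})\big)$ contain $x_n$, satisfy $U_n\cap\{x_m:\ m\in\NN\}=\{x_n\}$, and --- the crucial feature --- form a point-finite family, because any $x\in X$ lies in $U_n$ only for the at most two integers $n$ with $2^{-n-1}<g(x)<2^{-n+1}$. Trimming off the finitely many remaining support points of $\chi_n$, set $V_n:=U_n\setminus\big(\supp\chi_n\setminus\{x_n\}\big)$: still open, still containing $x_n$, still point-finite, and now also $V_n\cap\supp\chi_n=\{x_n\}$.

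To conclude, choose $h_n\in C(X)$ with $0\le h_n\le1$, $h_n(x_n)=1$ and $h_n\equiv0$ off $V_n$. Because the cozero sets $\{h_n\ne0\}\subseteq V_n$ form a point-finite family, for arbitrary scalars $c_n$ the family $\{c_nh_n:\ n\in\NN\}$ is pointwise bounded on $X$ --- at each point only finitely many terms are nonzero --- hence a bounded subset of $C_p(X)$. Taking $c_n:=n/\chi_n(x_n)$ gives $\chi_n(c_nh_n)=c_n\chi_n(x_n)h_n(x_n)=n$, since $h_n$ vanishes at the other support points of $\chi_n$. Thus $\{c_nh_n\}$ is bounded in $C_p(X)$ while $\sup_n|\chi_n(c_nh_n)|=\infty$ with $\chi_n\in B$, contradicting the $\beta$-boundedness of $B$. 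Hence $S$ is finite, $B$ is finite-dimensional, and $L_\beta(X)$ is feral.

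The step I expect to be the real obstacle is keeping $\{c_nh_n\}$ pointwise bounded once the coefficients $c_n$ are forced to grow without bound: this is exactly why the bumps $h_n$ must be supported in a \emph{point-finite} family of open sets, which in turn dictates the passage to a relatively discrete subsequence and the ``annular'' partition of $X$ supplied by $g=\sum 2^{-n}f_n$. The remaining ingredients --- interpolation of continuous functions on finite sets, the value of $g$ on the $x_n$, and the elementary estimate that the $U_n$ are point-finite --- are routine.
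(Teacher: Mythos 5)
Your proof is correct, and for part (i) it follows the same basic strategy as the paper: extract a relatively discrete sequence of support points $x_n\in\supp(\chi_n)$, build bump functions peaking at $x_n$ with values growing like $n/\chi_n(x_n)$, observe that the resulting sequence is bounded in $C_p(X)$ while $\chi_n$ blows up on it. The implementations differ in two worthwhile ways. First, where the paper passes to a subsequence admitting \emph{pairwise disjoint} neighbourhoods $U_n$ that also avoid the earlier supports and the other support points of $\chi_n$ (so that exactly one $f_n$ is nonzero at each point), you settle for a \emph{point-finite} family of ``annuli'' $g^{-1}\big((2^{-n-1},2^{-n+1})\big)$ manufactured from the single function $g=\sum_n 2^{-n}f_n$; point-finiteness is indeed all that pointwise boundedness of $\{c_nh_n\}$ requires, and your construction makes the existence of such a family completely explicit rather than relying on the standard disjoint-neighbourhoods refinement. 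Second, in part (ii) the paper deduces non-equicontinuity by the longer route: equicontinuous $\Rightarrow$ contained in an Alaoglu-compact polar $\Rightarrow$ strongly bounded $\Rightarrow$ finite-dimensional by part (i); you instead observe directly that the polar of a basic $C_p$-neighbourhood $\{f:|f(y_i)|<\e,\ i\le k\}$ lies in $\spn\{\delta_{y_1},\dots,\delta_{y_k}\}$, so equicontinuous subsets of $L(X)$ are finite-dimensional outright. Your version of (ii) is more elementary and independent of (i); the paper's version has the virtue of exhibiting (ii) as a formal consequence of ferality alone (this is exactly the mechanism reused in Theorem 2.7(iii)). One small point worth making explicit in your write-up: the equivalence ``$B$ finite-dimensional $\iff$ $S=\bigcup_{\chi\in B}\supp\chi$ finite'' deserves a sentence (if $\spn B$ has basis $\chi_1,\dots,\chi_d$ then $\supp\chi\subseteq\bigcup_i\supp\chi_i$ for every $\chi\in B$), but this is routine.
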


\begin{proof}
(i) Suppose for a contradiction that there is a bounded infinite-dimensional subset $B$ in $L_\beta(X)$. For $n=1$, fix arbitrarily a nonzero $\chi_1\in B$ and let $x_1 \in \supp(\chi_1)$ be such that $\chi_1(x_1)=a_1 \not= 0$.
Since $B$ is infinite-dimensional, by induction,  for every natural number $n>1$ there exists a  $\chi_{n}\in B$ satisfying the following condition: there is an $x_{n}\in \supp(\chi_{n})$ such that
\begin{equation} \label{equ:DP-01}
x_{n} \not\in \bigcup_{i=1}^{n-1}\supp(\chi_i) \; \mbox{ and } \; \chi_{n}(x_{n})=a_{n} \not= 0.
\end{equation}
Clearly, all elements $x_n$ are distinct. Passing to a subsequence if needed we can assume that, for every $n\in\NN$, there is an open neighborhood $U_n$ of $x_n$ such that
\begin{equation} \label{equ:DP-02}
U_n \cap \left( \big[\supp(\chi_n)\setminus\{ x_n\}\big] \cup \bigcup_{i=1}^{n-1} U_i \right) =\emptyset.
\end{equation}
Finally, for every $n\in\NN$, take a function $f_n\in C(X)$ such that $\supp(f_n)\subseteq U_n$ and $f_n(x_n)=n/a_n$. It is easy to see $f_n \to 0$ in $C_p(X)$, and hence the sequence $S=\{ f_n: n\in\NN\}$ is bounded in $C_p(X)$. The choice of $f_n$, (\ref{equ:DP-01}) and (\ref{equ:DP-02}) imply $\chi_n(f_n)=f_n(x_n)=n \to\infty$. Therefore the sequence $\{ \chi_n: n\in\NN\}\subseteq B$ is unbounded, a contradiction.

\smallskip
(ii) If $C_p(X)$ is barrelled then trivially it is $c_0$-barrelled. Conversely, assume that $C_p(X)$ is $c_0$-barrelled.  By the Buchwalter--Schmets theorem, it suffices to prove that every functionally bounded subset of $X$ is finite. Suppose for a contradiction that $X$ has a one-to-one sequence $\{ a_n:n \in\NN\}\subseteq X$  which is functionally bounded in $X$. For every $n\in \NN$, set $\chi_n= 2^{-n}\delta_{a_n}\in L(X)$, where $\delta_{a_n}$ is the Dirac measure at $a_n$. Then, for every $f\in C(X)$, we have
\[
|\chi_n (f)| \leq 2^{-n}\cdot \sup\{ |f(a_n)|: n\in\NN\} \to 0.
\]
Therefore $\chi_n $ is a weak-$\ast$ null sequence. Since $C_p(X)$ is $c_0$-barreled we obtain that the sequence $S=\{ \chi_n: n\in\NN\}$ is equicontinuous. So there is a neighborhood $U$ of zero in $E$ such that $S\subseteq U^\circ$. Since, by the Alaoglu theorem, $U^\circ$ is a $\sigma(E',E)$-compact convex subset of $E'$,  we obtain that $S$ is strongly bounded, see Theorem  11.11.5 of \cite{NaB}.  Clearly, the sequence $S$ is infinite-dimensional and hence, by (i),  $S$ is not  bounded in the strong topology. This contradiction finishes the proof.
\end{proof}

Now Theorem \ref{t:DP-Cp}  follows immediately from Proposition \ref{p:Cp-strong-feral} and the next result below.

\begin{theorem} \label{t:Cp-not-DP}
Let $E$ be an lcs whose strong dual is  feral. Then:
\begin{enumerate}
\item[{\rm (i)}] $E$ has the $(sDP)$ property;
\item[{\rm (ii)}] $E$ has the $(DP)$ property;
\item[{\rm (iii)}] $E$ has the Grothendieck property if and only if it is $c_0$-barrelled.
\end{enumerate}
\end{theorem}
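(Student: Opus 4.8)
The plan is to distill from the ferality of $E'_\beta$ a single structural fact and then obtain all three assertions from it by soft arguments. The fact is: \emph{every bounded subset $B$ of $E'_\beta$ is contained in a finite-dimensional subspace $F$ of $E'$ and is, moreover, equicontinuous.} Indeed, ferality gives that the linear span of $B$ is a finite-dimensional subspace $F=\spn\{e_1,\dots,e_k\}$ with $e_1,\dots,e_k\in E'$; since $F$ carries a unique Hausdorff vector topology, each of $\sigma(E',E)$, $\sigma(E',E'')$ and $\beta(E',E)$ restricts to $F$ as the Euclidean topology, so $B$, being $\beta(E',E)$-bounded and contained in $F$, lies in some set $C\cdot\{\sum_{j=1}^k t_je_j:\sum_{j=1}^k|t_j|\le 1\}$; as $\{e_1,\dots,e_k\}$ is a finite (hence equicontinuous) subset of $E'$ and equicontinuity is preserved under absolutely convex hulls and dilations, $B$ is equicontinuous. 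Beyond this I will use only the elementary facts that a weakly convergent sequence in an lcs is bounded, that bounded coincides with weakly bounded in an lcs, and that an equicontinuous subset of $E'$ is $\beta(E',E)$-bounded.

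For (i): let $(x_n)$ be $\sigma(E,E')$-null in $E$ and $(\chi_n)$ be $\sigma(E',E'')$-null in $E'_\beta$. Then $\{\chi_n:n\in\NN\}$ is $\sigma(E',E'')$-bounded, hence bounded in $E'_\beta$, hence — by the structural fact — contained in a finite-dimensional $F=\spn\{e_1,\dots,e_k\}\subseteq E'$. Writing $\chi_n=\sum_{j=1}^k c_{n,j}e_j$ and using that $\sigma(E',E'')$ induces the Euclidean topology on $F$, we get $c_{n,j}\to 0$ for each $j$. Since $x_n\to 0$ weakly, each scalar sequence $(e_j(x_n))_n$ is bounded, so $\chi_n(x_n)=\sum_{j=1}^k c_{n,j}e_j(x_n)\to 0$; this is the $(sDP)$ property.

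For (ii): by Theorem \ref{t:Edwards-DP} it suffices to show that every absolutely convex weakly compact $W\subseteq E$ is precompact for $\tau_{\Sigma'}$; since a weakly compact set is weakly precompact, and a set precompact for a topology is precompact for any coarser one, this will follow once we know $\tau_{\Sigma'}\subseteq\sigma(E,E')$. So let $K$ be an absolutely convex, equicontinuous, weakly compact subset of $E'_\beta$. Being bounded in $E'_\beta$, by the structural fact it lies in a finite-dimensional subspace and in fact, since $\beta(E',E)$ is Euclidean there, inside some $C\cdot\{\sum_{j=1}^m t_je_j:\sum_{j=1}^m|t_j|\le 1\}$; hence the seminorm $p_K(x)=\sup_{\chi\in K}|\chi(x)|$ of uniform convergence on $K$ satisfies $p_K(x)\le C\max_{1\le j\le m}|e_j(x)|$ for all $x\in E$. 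Thus every $\tau_{\Sigma'}$-continuous seminorm is $\sigma(E,E')$-continuous, i.e.\ $\tau_{\Sigma'}\subseteq\sigma(E,E')$, and the $(DP)$ property follows.

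For (iii): assume first that $E$ is $c_0$-barrelled and let $\chi_n\to\chi$ in $\sigma(E',E)$ (so $\chi\in E'$). Then $(\chi_n-\chi)$ is a $\sigma(E',E)$-null sequence, hence equicontinuous by $c_0$-barrelledness, hence bounded in $E'_\beta$, hence contained in a finite-dimensional subspace on which $\sigma(E',E)$ and $\sigma(E',E'')$ coincide; therefore $\chi_n-\chi\to 0$ in $\sigma(E',E'')$ as well, so $E$ has the Grothendieck property. Conversely, if $E$ has the Grothendieck property and $(\chi_n)$ is $\sigma(E',E)$-null, then it is $\sigma(E',E'')$-null, in particular bounded in $E'_\beta$, hence equicontinuous by the structural fact; so $E$ is $c_0$-barrelled. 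The only delicate point — the ``main obstacle'', such as it is — is the structural fact and its use in (ii): one must exploit that a finite-dimensional subspace of $E'$ has a single Hausdorff vector topology in order to transfer boundedness and convergence freely among $\sigma(E',E)$, $\sigma(E',E'')$ and $\beta(E',E)$, which is precisely what makes the equicontinuous weakly compact members of $E'_\beta$ so small that $\tau_{\Sigma'}$ collapses to the weak topology of $E$.
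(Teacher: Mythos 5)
Your proof is correct and follows essentially the same route as the paper: ferality forces every (weakly) bounded subset of $E'_\beta$ into a finite-dimensional subspace, where the unique Hausdorff vector topology lets one pass freely between $\sigma(E',E)$, $\sigma(E',E'')$ and $\beta(E',E)$ and yields equicontinuity. Packaging this as a single ``structural fact'' is a tidy reorganization (and makes explicit the steps the paper labels ``clearly''/``trivially''), but the substance of all three parts coincides with the paper's argument.
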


\begin{proof}
(i) Let $S'=\{ \chi_n :n\in\NN\}$ be a weakly null sequence in  $E'_\beta$. As $E'_\beta$ is feral, the sequence  $S'$ is finite-dimensional, and hence for every weakly null (even bounded) sequence $\{ x_n :n\in\NN\}$ in $E$ we trivially have $\chi_n (x_n) \to 0$. Thus $E$ has the $(sDP)$ property.

\smallskip
(ii) We use Theorem \ref{t:Edwards-DP}. First we note that every weakly compact subset of $E'_\beta$ is finite-dimensional. Therefore, every polar $A^\circ$ of an absolutely convex, equicontinuous and weakly compact subset $A$ of $E'_\beta$ defines a weak neighborhood at zero of $E$. Therefore $\tau_{\Sigma'}$ coincides with the weak topology  of $E$. Thus $E$ has the $(DP)$ property.

\smallskip
(iii) Assume that $E$ has the Grothendieck property. Suppose for a contradiction that $E$ is not $c_0$-barrelled. Then there exists a weak-$\ast$ null sequence $S$ in $E'$ which is not equicontinuous. Clearly, $S$ is infinite-dimensional. Since $E'_\beta$ is feral it follows that $S$ is not strongly bounded. Thus $S$ does not converge to zero in the weak topology of $E'_\beta$, and hence $E$ does not have the Grothendieck property. This contradiction shows that $E$ must be $c_0$-barrelled.

Conversely, assume that $E$ is $c_0$-barrelled and let $S=\{ \chi_n: n\in\NN\}$ be a weak-$\ast$ null sequence in $E'$. Then $S$ is equicontinuous. So there is a neighborhood $U$ of zero in $E$ such that $S\subseteq U^\circ$. Since, by the Alaoglu theorem, $U^\circ$ is a weak-$\ast$ compact convex subset of $E'$,  we obtain that $S$ is strongly bounded, see Theorem  11.11.5 of \cite{NaB}. Therefore $S$ is finite-dimensional because $E'_\beta$ is feral. Hence $S$ is also a weakly null sequence in $E'_\beta$. Thus $E$  has the Grothendieck property.
\end{proof}


\section{ The $(DP)$ property and the Grothendieck property for $\CC(X)$} \label{sec:Ck-GDP}



For a Tychonoff space $X$, we denote by $M_c(X)$ the space of all finite real regular Borel measures on $X$ with compact support (which will be denoted by $\mu,\nu$ etc.). It is well known that $M_c(X)$ is the dual space of $\CC(X)$.

Let $K$ be a compact subspace of a Tychonoff space $X$. Denote by $M_K(X)$ the linear subspace of $M_c(X)$ of all measures with support in $K$.
Denote by $J: M(K)\to M_K(X)$ the natural inclusion map defined by
\[
J(\nu)(A):= \nu(A\cap K),
\]
where $A$ is a Borel subset of $X$.

\begin{lemma} \label{l:M(K)-M(X)}
Let $K$ be a compact subspace of a Tychonoff space $X$. Then $J$ is a linear isomorphism of the Banach space $M(K)$ onto the subspace $M_K(X)$ of $M_c(X)_\beta$.
\end{lemma}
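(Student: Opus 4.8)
The plan is to verify that $J$ is a well-defined linear bijection and that both $J$ and $J^{-1}$ are continuous for the relevant topologies, where $M(K)$ carries its Banach (total variation) norm and $M_K(X)$ carries the topology inherited from the strong dual $M_c(X)_\beta = \CC(X)'_\beta$. First I would check that $J$ is well defined: for a finite regular Borel measure $\nu$ on $K$, the set function $J(\nu)(A) = \nu(A\cap K)$ is a finite Borel measure on $X$ whose support is contained in $K$ (hence compact), and regularity on $X$ follows from regularity on $K$ together with compactness of $K$; this places $J(\nu)$ in $M_K(X) \subseteq M_c(X)$. Linearity of $J$ is immediate, and injectivity is clear since $J(\nu) = 0$ forces $\nu(B) = \nu(B\cap K) = J(\nu)(B) = 0$ for every Borel $B \subseteq K$. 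Surjectivity is the statement that every $\mu \in M_c(X)$ with $\supp(\mu) \subseteq K$ is of the form $J(\nu)$ for $\nu := \mu\uhr_{\mathcal B(K)}$, the restriction of $\mu$ to the Borel $\sigma$-algebra of $K$; since $\mu$ is concentrated on $K$ we have $\mu(A) = \mu(A\cap K) = J(\nu)(A)$ for all Borel $A \subseteq X$.

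Next I would identify the two topologies on $M_K(X)$ via duality. The key point is the restriction map $R \colon \CC(X) \to C(K)$, $f \mapsto f\uhr_K$, which is continuous, linear, and surjective by the Tietze extension theorem (as $K$ is compact, hence $C^*$-embedded in the Tychonoff space $X$). For $\nu \in M(K)$ and $f \in C(X)$ one has $\langle J(\nu), f\rangle = \int_X f\, dJ(\nu) = \int_K f\uhr_K\, d\nu = \langle \nu, Rf\rangle$, i.e. $J$ is the adjoint-type map dual to $R$. Because $R$ is a continuous surjection between the barrelled-type spaces involved (or simply because $R$ maps bounded sets to bounded sets and, by Tietze, the unit ball of $C(K)$ is the image under $R$ of a bounded subset of $\CC(X)$, e.g. $\{f \in C(X) : \|f\|_{K} \le 1,\ \sup|f| \le 1\}$ — more carefully, every bounded subset of $C(K)$ lifts to a bounded subset of $\CC(X)$), the transpose $J = R^{t}$ carries the strong topology $\beta(M(K), C(K))$ — which is exactly the norm topology on $M(K)$ — to the strong topology on $M_c(X)_\beta$ restricted to $M_K(X)$, and conversely. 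Concretely: a basic strong neighborhood of $0$ in $M_c(X)_\beta$ is the polar $B^\circ$ of a bounded set $B \subseteq \CC(X)$; its trace on $M_K(X)$ corresponds under $J$ to the polar in $M(K)$ of $R(B)$, a bounded subset of $C(K)$, hence to a norm-neighborhood of $0$; and every norm-neighborhood of $0$ in $M(K)$ arises this way because bounded subsets of $C(K)$ lift through $R$.

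The main obstacle I anticipate is the lifting statement used in the last step, namely that $R$ maps some bounded subset of $\CC(X)$ onto (a set containing) the closed unit ball of $C(K)$, equivalently that bounded subsets of $C(K)$ pull back to bounded subsets of $\CC(X)$; this is where compactness of $K$ and the Tietze/Urysohn extension theorem do the real work, and one must be a little careful that "bounded in $\CC(X)$" means bounded on every compact subset of $X$, not merely on $K$ — but a standard argument gives a bounded extension operator or at least extends the unit ball of $C(K)$ to functions uniformly bounded by $1$ on all of $X$, which suffices. Once this lifting is in hand, the equality of the two topologies is a routine polar computation, and combining it with the algebraic bijection established first yields that $J$ is a linear isomorphism of $M(K)$ onto $M_K(X)$ as a topological subspace of $M_c(X)_\beta$, completing the proof.
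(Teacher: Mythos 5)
Your proposal is correct and follows essentially the same route as the paper: continuity of $J$ is obtained by recognizing it as (a corestriction of) the adjoint of the restriction map $\CC(X)\to C(K)$, and openness by using the Tietze--Urysohn theorem to lift the unit ball of $C(K)$ to the set $\{f\in C(X): |f(x)|\le 1 \ \forall x\in X\}$, which is bounded in $\CC(X)$, followed by the polar computation. The only difference is that you spell out the algebraic bijectivity and well-definedness, which the paper dismisses as clear.
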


\begin{proof}
It is clear that $J$ is a linear isomorphism. We show that $J$ is a homeomorphism.

Denote by $S$ the restriction map from $\CC(X)$ to $C(K)$, i.e., $S(f):= f|_K$ for every $f\in\CC(X)$. Clearly, $S$ is a continuous linear operator. Therefore its adjoint map $S^\ast: M(K) \to M_c(X)_\beta$ is continuous, see \cite[Theorem~8.11.3]{NaB}. Noting that
\[
S^\ast (\nu)(f)=\nu(S(f)), \quad \nu\in M(K), \; f\in \CC(X),
\]
we see that $J$ is a corestriction of $S^\ast$ to $M_K(X)$. Thus $J$ is continuous.

To show that $J$ is also open it is sufficient to prove that $J(B_{M(K)})$ contains a neighborhood of zero in $M_K(X)$, where $B_{M(K)}$ is the closed unit ball of the Banach space $M(K)$.  Define
\[
B:= \{ f\in C(X): \; |f(x)|\leq 1 \mbox{ for every } x\in X\}.
\]
It is clear that $B$ is a bounded subset of $\CC(X)$. Therefore, $B^\circ \cap M_K(X)$ is a neighborhood of zero in $M_K(X)$. We show that $B^\circ \cap M_K(X) \subseteq J(B_{M(K)})$. Indeed, let $\mu\in B^\circ \cap M_K(X)$ and denote by $\nu$ the restriction of $\mu$ onto $K$; so $\nu\in M(K)$ and $J(\nu)=\mu$. We have to prove that $\nu\in B_{M(K)}$. Fix an arbitrary function $g\in B_{C(K)}$. By the Tietze--Urysohn theorem, choose an extension $\widetilde{g}\in C(X)$  of $g$ onto $X$ such that $|\widetilde{g}(x)|\leq 1$ for every $x\in X$. Then $\widetilde{g}\in B$, and since $\mu\in B^\circ$ we obtain
\[
|\nu(g)|=\left| \int_K g(x) d \nu\right| =\left| \int_X \widetilde{g}(x)d \mu\right|  \leq 1.
\]
Thus $\nu\in B_{M(K)}$.
\end{proof}

Below we provide  a quite general condition on a Tychonoff space $X$ for which the space $\CC(X)$ has the $(sDP)$ property.
Recall that the sets
\[
[K;\e] :=\{ f\in C(X): |f(x)|<\e \; \forall x\in K\},
\]
where $K$ is a compact subset of  $X$ and $\e>0$, form a base at zero of the compact-open topology $\tau_k$ of $\CC(X)$.

\begin{theorem} \label{t:sequential-DP-c0-quasibarrelled}
Each $c_0$-barrelled space $\CC(X)$ has the $(sDP)$ property.
\end{theorem}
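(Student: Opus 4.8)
The plan is to exploit $c_0$-barrelledness in order to confine the dual sequence to a single space $M_K(X)\cong M(K)$ with $K\subseteq X$ compact, and then to invoke Grothendieck's classical theorem that $C(K)$ has the $(DP)$ property. Write $E=\CC(X)$, so that $E'_\beta=M_c(X)_\beta$. Let $\{f_n\}_{n\in\NN}$ be weakly null in $E$ and $\{\mu_n\}_{n\in\NN}$ weakly null in $E'_\beta$; the goal is to show $\mu_n(f_n)\to 0$. First I would note that a weakly null sequence in $E'_\beta$ is in particular $\sigma(E',E)$-null, so since $E$ is $c_0$-barrelled the sequence $\{\mu_n\}$ is equicontinuous, and hence $\{\mu_n\}\subseteq[K;\e]^\circ$ for some compact $K\subseteq X$ and some $\e>0$. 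A routine polar computation then pins down $[K;\e]^\circ$: if $\mu\in[K;\e]^\circ$ and $f\in C(X)$ vanishes on $K$, then $nf\in[K;\e]$ for every $n$ forces $\mu(f)=0$, so $\mu(f)$ depends only on $f|_K$; using the Tietze--Urysohn theorem (which also permits extending while keeping the sup-norm bound) one obtains $\mu=S^{\ast}(\nu)=J(\nu)$ for a uniquely determined $\nu\in M(K)$ with $\|\nu\|\le\e^{-1}$, where $S\colon\CC(X)\to C(K)$, $S(f)=f|_K$, is the continuous surjective restriction operator and $J$, $M_K(X)$ are as in Lemma~\ref{l:M(K)-M(X)}. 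In particular each $\mu_n$ is a measure supported in $K$, and $\mu_n=J(\nu_n)$ for some $\nu_n\in M(K)$.

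Next I would transfer both weak convergences into the Banach space $C(K)$. By Lemma~\ref{l:M(K)-M(X)}, $J$ is a linear homeomorphism of the Banach space $M(K)$ onto the subspace $M_K(X)$ of $M_c(X)_\beta$; and since, by the Hahn--Banach theorem, the weak topology of $M_K(X)$ coincides with the topology induced on it by the weak topology of $M_c(X)_\beta$, the sequence $\{\mu_n\}$ is weakly null in $M_K(X)$, whence $\nu_n\to 0$ weakly in $M(K)$. On the other hand $S$ is a continuous linear operator and therefore carries weakly convergent sequences to weakly convergent ones, so $S(f_n)\to 0$ weakly in $C(K)$. Finally, since $K$ is compact, $C(K)$ has the $(DP)$ property (Grothendieck), which for the Banach space $C(K)$ means exactly, by the characterization recalled before Definition~\ref{def:sDP}, that the pairing of a weakly null sequence in $C(K)$ with a weakly null sequence in $C(K)'=M(K)$ tends to zero. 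Applying this to $S(f_n)$ and $\nu_n$ yields $\mu_n(f_n)=S^{\ast}(\nu_n)(f_n)=\nu_n\big(S(f_n)\big)\to 0$, which completes the argument.

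The step I expect to need the most care is the localization: it is precisely $c_0$-barrelledness that forces all the $\mu_n$ to live in one space $M_K(X)\cong M(K)$ and thereby makes Grothendieck's theorem applicable, and one must be careful with the polar computation and with the Tietze extension preserving the relevant sup-norm bounds. Everything afterwards — weak-to-weak continuity of $S$, the Hahn--Banach description of the weak topology of a subspace, and the classical $(DP)$ property of $C(K)$ — is routine.
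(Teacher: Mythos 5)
Your proposal is correct and follows essentially the same route as the paper: use $c_0$-barrelledness to make $\{\mu_n\}$ equicontinuous and hence contained in some $[K;\e]^\circ$, localize the measures to $M(K)$ via Lemma~\ref{l:M(K)-M(X)}, push $\{f_n\}$ into $C(K)$ by the weakly continuous restriction map, and conclude with Grothendieck's $(DP)$/$(sDP)$ property of the Banach space $C(K)$. The only cosmetic difference is that you identify $[K;\e]^\circ$ by a polar computation showing $\mu(f)$ depends only on $f|_K$, whereas the paper argues directly that $\supp(\mu_n)\subseteq K$; these are the same observation.
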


\begin{proof}
Let $\{ f_n\}_{n\in\NN}$ and $\{ \mu_n\}_{n\in\NN}$ be  weakly null sequences in $\CC(X)$ and its strong dual $M_c(X)_\beta$, respectively. We have to show that $\lim_n \mu_n(f_n)=0$.

Observe that the weak topology of $M_c(X)_\beta$ is stronger than the weak-$\ast$ topology on $M_c(X)$.
Therefore the $c_0$-barrelledness of  $\CC(X)$ implies that the sequence $S=\{ \mu_n\}_{n\in\NN}$ is equicontinuous. So there is a compact subset $K$ of $X$ and $\e>0$ such that $S\subseteq [K;\e]^\circ$. Since $X$ is Tychonoff, it follows that $\supp(\mu_n)\subseteq K$ for every $n\in\NN$. Indeed, otherwise, there is a function $f\in C(X)$ with support in $X\setminus K$ such that $\mu(f)>0$. It is clear that $\lambda f\in [K;\e]$ for every $\lambda>0$, and hence $\mu(\lambda f)> 1$ for sufficient large $\lambda$, a contradiction.

For every $n\in\NN$, denote by $\nu_n$ the restriction of $\mu_n$ onto $K$, i.e., $\nu_n(A):=\mu_n(A\cap K)$ for every Borel subset $A$ of $X$. By Lemma \ref{l:M(K)-M(X)}, $\nu_n\to 0$ in the weak topology of the Banach space $M(K)$.
Observe that the sequence $\{ f_n|_K\}_{n\in\NN}$ is weakly null in the Banach space $C(K)$ because the restriction map $S:\CC(X) \to C(K)$, $S(f):= f|_K$, is continuous and hence is weakly continuous. Since the support of $\mu_n$ is contained in $K$ we obtain
\[
\nu_n(f|_K)=\int_K f|_K(x)d\nu_n = \int_X f(x)d\mu_n =\mu_n(f)
\]
for every $f\in C(X)$. Now this equality  and the $(sDP)$ property of $C(K)$ imply
$
\lim_n \mu_n(f_n)=\lim_n \nu_n\big(f_n|_K\big)=0.
$
Thus $\CC(X)$ has the $(sDP)$ property.
\end{proof}

Let $\alpha$ and $\kappa$ be ordinals such that $\alpha<\kappa$.
Since $[0,\alpha]$ and $(\alpha,\kappa)$ are clopen subspaces of $[0,\kappa)$, we have
\begin{equation} \label{equ:DP-1}
\CC\big([0,\kappa)\big) = C\big([0,\alpha]\big) \oplus \CC\big((\alpha,\kappa)\big),
\end{equation}
and hence, for strong dual spaces,
\begin{equation} \label{equ:DP-2}
M_c\big([0,\kappa)\big)_\beta =M_c\big([0,\alpha]\big)_\beta\oplus M_c\big((\alpha,\kappa)\big)_\beta.
\end{equation}

\begin{proposition} \label{p:Ck-ordinal-c0-barrelled}
For every ordinal $\kappa$, the space $\CC \big([0,\kappa)\big)$ is $c_0$-barrelled.
\end{proposition}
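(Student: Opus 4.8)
The plan is to use the definition directly: $\CC([0,\kappa))$ is $c_0$-barrelled precisely when every $\sigma(M_c([0,\kappa)),\CC([0,\kappa)))$-null sequence in $M_c([0,\kappa))$ is equicontinuous, and then to argue by a dichotomy on the cofinality of $\kappa$. Throughout one uses the elementary fact that a compact subset of $[0,\kappa)$ is bounded, i.e.\ contained in $[0,\beta]$ for some $\beta<\kappa$.

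First I would dispose of the case in which $[0,\kappa)$ is $\sigma$-compact, that is, $\kappa$ is $0$, a successor ordinal, or a limit ordinal of countable cofinality. In this case $[0,\kappa)$ is hemicompact (when $\mathrm{cf}(\kappa)=\omega$, fix an increasing sequence cofinal in $\kappa$ and use that compact subsets of $[0,\kappa)$ are bounded), so $\CC([0,\kappa))$ is metrizable; moreover $[0,\kappa)$ is open in the compact space $[0,\kappa]$, hence locally compact, hence a $k$-space, and therefore $\CC([0,\kappa))$ is complete. Thus $\CC([0,\kappa))$ is a Fr\'echet space, so it is barrelled, and in particular $c_0$-barrelled.

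The remaining, and main, case is that $\kappa$ is a limit ordinal with $\mathrm{cf}(\kappa)>\omega$; here $\CC([0,\kappa))$ need not even be barrelled (take $\kappa=\omega_1$, which is pseudocompact and not a $\mu$-space), so the proof must genuinely exploit that $c_0$-barrelledness only constrains \emph{sequences}. Let $\{\mu_n\}_{n\in\NN}$ be a weak-$*$ null sequence in $M_c([0,\kappa))$. Each $\supp(\mu_n)$ is compact, hence bounded, say $\supp(\mu_n)\subseteq[0,\alpha_n]$ with $\alpha_n<\kappa$; since $\mathrm{cf}(\kappa)>\omega$, $\alpha:=\sup_n\alpha_n<\kappa$, so every $\mu_n$ has support in the compact set $K:=[0,\alpha]$. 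By (\ref{equ:DP-2}) and Lemma \ref{l:M(K)-M(X)}, the $\mu_n$ all lie in $M(K)$, the strong dual of the Banach space $C(K)$, and since every $g\in C(K)$ extends (Tietze--Urysohn) to some $f\in\CC([0,\kappa))$, the sequence $\{\mu_n\}$ is $\sigma(M(K),C(K))$-null. By the Banach--Steinhaus theorem in $C(K)$ we get $C:=\sup_n\|\mu_n\|_{M(K)}<\infty$, whence for every $f\in C([0,\kappa))$ with $|f(x)|<\tfrac{1}{C+1}$ on $K$ one has $|\mu_n(f)|\le\|\mu_n\|_{M(K)}\sup_{x\in K}|f(x)|<1$. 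Thus $\{\mu_n\}\subseteq[K;\tfrac{1}{C+1}]^{\circ}$, i.e.\ $\{\mu_n\}$ is equicontinuous, as required.

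I expect the only genuine obstacle to be the uncountable-cofinality case, and within it the key observation is simply that a countable union of compact (hence bounded) subsets of $[0,\kappa)$ is again bounded when $\mathrm{cf}(\kappa)>\omega$; after that everything reduces to standard Banach-space functional analysis on $C(K)$. Minor care is needed to justify that weak-$*$ convergence in $M_c([0,\kappa))$ descends to weak-$*$ convergence in $M(K)$ (via Tietze extension together with $\supp(\mu_n)\subseteq K$) and that boundedness of $\|\mu_n\|_{M(K)}$ really yields membership in the polar of a basic neighbourhood $[K;\e]$ of $\CC([0,\kappa))$.
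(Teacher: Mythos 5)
Your proof is correct and follows essentially the same route as the paper: the same dichotomy on $\mathrm{cf}(\kappa)$, and in the uncountable-cofinality case the same reduction of the supports to a single compact interval $[0,\alpha]$, followed by Banach--Steinhaus in $M([0,\alpha])$ and the identification of the resulting uniform bound with membership in the polar of a basic neighbourhood $[\,[0,\alpha];\e]$. The only cosmetic difference is that you transfer weak-$*$ convergence to $M([0,\alpha])$ via Tietze extension where the paper uses surjectivity of the restriction map coming from the clopen decomposition $[0,\kappa)=[0,\alpha]\cup(\alpha,\kappa)$; these are interchangeable.
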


\begin{proof}
If $\kappa$ is  a successor ordinal or has countable cofinality, then $\CC \big([0,\kappa)\big)$ is a Banach space or a Fr\'{e}chet space, respectively. Therefore $\CC \big([0,\kappa)\big)$ is even a barrelled space. Assume now that the cofinality $\mathrm{cf}(\kappa)$ of $\kappa$ is uncountable.  For simplicity, set $E:=\CC\big([0,\kappa)\big)$ and let $E'_\beta :=M_c\big([0,\kappa)\big)_\beta $ be the strong dual of $E$.

Let $A=\{ \mu_n\}_{n\in\NN}$ be a weakly-$\ast$ null sequence in $E'_\beta$. For every $n\in \NN$, the support of $\mu_n$ is compact and hence there is an ordinal $\alpha_n$, $\alpha_n< \kappa$, such that  $\supp(\mu_n) \subseteq [0,\alpha_n]$. Set $\alpha:= \sup\{ \alpha_n: n\in\NN\}$. Since $\mathrm{cf}(\kappa)>\w$, we have $\alpha<\kappa$. For every $n\in \NN$, denote by $\nu_n $ the restriction $ \mu_n |_{[0,\alpha]}$ of $\mu_n$ onto $[0,\alpha]$. Since the restriction map $T: E\to C\big([0,\alpha]\big), T(f)=f|_{[0,\alpha]}$, is surjective  we obtain that the sequence $S=\{ \nu_n\}_{n\in\NN}$ is a weakly-$\ast$ null sequence in the dual $M\big([0,\alpha]\big)$ of the Banach space $C\big([0,\alpha]\big)$. Thus $S$ is equicontinuous, and hence there is $\lambda>0$ such that
\[
S \subseteq \lambda \tilde{B}_\alpha^\circ, \mbox{ where } \tilde{B}_\alpha :=\big\{ g\in C\big([0,\alpha]\big): |g(x)|\leq 1 \mbox{ for all } x\in [0,\alpha]\big\}.
\]
Set $B_\alpha:=  \tilde{B}_\alpha \times \CC\big((\alpha,\kappa)\big)$. It follows from (\ref{equ:DP-1}) that $B_\alpha$ is a neighborhood of zero in $E$.
Then, for every $f\in B_\alpha$ and each $\mu_n\in A$, we have
\[
|\mu_n(f)|=\left| \int_{[0,\kappa)} f(x) d\mu_n \right| =\left| \int_{[0,\alpha]} f|_{[0,\alpha]} (x) d\nu_n \right|  \leq \lambda.
\]
 Therefore $A\subseteq \lambda B_\alpha^\circ$ and hence  $A$ is equicontinuous. Thus $E$ is $c_0$-barrelled.
\end{proof}
The Nachbin--Shirota theorem, see \cite{bonet},  implies that $\CC \big([0,\kappa)\big)$ is barrelled if and only if $\kappa$ is  a successor ordinal or has countable cofinality. Therefore, if  the cofinality $\mathrm{cf}(\kappa)$ of $\kappa$ is uncountable (for example, $\kappa=\w_1$), then $\CC \big([0,\kappa)\big)$ is $c_0$-barrelled but not barrelled.

Below we prove Theorem \ref{t:DP-Ck}.

\medskip
{\em Proof of Theorem \ref{t:DP-Ck}}.
(i) Assume that $X$ is a hemicompact space.
Then the space $\CC(X)$ has the $(sDP)$ property by Theorem \ref{t:sequential-DP-c0-quasibarrelled}. Applying
Proposition \ref{p:DP-LF-DF} we obtain that the space $\CC(X)$ has also the $(DP)$-property.

\smallskip
(ii) Assume that $X$ is a cosmic space. First we recall that each cosmic space  is Lindel\"{o}f  and every its compact subset  is metrizable, see \cite{Mich}. Therefore $X$ is a $\mu$-space, and hence $\CC(X)$ is barrelled. Proposition 10.5 of  \cite{Mich} implies that $C_p(X)$ is a cosmic space, and therefore every compact subset of $C_p(X)$ is metrizable. Observe that the weak topology of $\CC(X)$ is finer than the pointwise topology of $C_p(X)$. Thus  every weakly compact subset of $\CC(X)$ is metrizable, and hence is $\CC(X)$ is weakly sequentially angelic.

The space $\CC(X)$ has the $(sDP)$ property by Theorem \ref{t:sequential-DP-c0-quasibarrelled}.
To prove that $\CC(X)$ has also the $(DP)$ property  we apply Proposition \ref{p:DP-c0-quasibarrelled}. We proved that $\CC(X)$ is barrelled and weakly sequentially angelic. Therefore it remains to check that the strong dual $M_c(X)_\beta$ of $\CC(X)$ is weakly sequentially angelic.
Observe that the space $C_p(X)$ being cosmic is separable, see \cite[p.994]{Mich}. Therefore, by Corollary 4.2.2 of \cite{mcoy}, also the space $\CC(X)$  is separable. It follows that  $M_c(X)$ with the weak-$\ast$ topology admits a weaker metrizable locally convex vector topology. As the weak topology of the strong dual $M_c(X)_\beta$ of $\CC(X)$ is evidently stronger than the weak-$\ast$ topology on $M_c(X)$, we obtain that every weakly compact subsets of $M_c(X)_\beta$ is even metrizable, and therefore $M_c(X)_\beta$ is weakly sequentially angelic. Finally, Proposition \ref{p:DP-c0-quasibarrelled} implies that $\CC(X)$ has the $(DP)$ property.

\smallskip
(iii) Let $X=[0,\kappa)$ for some ordinal $\kappa$.  If $\kappa$ is  a successor ordinal or has countable cofinality, then  $[0,\kappa)$ is hemicompact. Thus, by (i), $\CC \big([0,\kappa)\big)$ has the $(sDP)$ property and the $(DP)$ property. Assume now that the cofinality $\mathrm{cf}(\kappa)$ of $\kappa$ is uncountable. 

Proposition \ref{p:Ck-ordinal-c0-barrelled} implies that $\CC\big([0,\kappa)\big)$ is $c_0$-barrelled. Thus, by Theorem \ref{t:sequential-DP-c0-quasibarrelled}, $\CC\big([0,\kappa)\big)$ has the $(sDP)$ property. We show below that $\CC\big([0,\kappa)\big)$ also has the $(DP)$ property.

Suppose for a contradiction that $\CC\big([0,\kappa)\big)$ does not have the $(DP)$ property. Then, by Theorem \ref{t:Edwards-DP}, there exists an absolutely convex, weakly compact subset $Q$ of $\CC\big([0,\kappa)\big)$ which is not precompact in the topology $\tau_{\Sigma'}$. Therefore, by Theorem 5 of \cite{BGP}, there are an absolutely convex, equicontinuous, weakly compact subset $W$ of $M_c\big([0,\kappa)\big)_\beta$  and a sequence $\{ f_n: n\in\NN\}$ in $Q$ such that
\begin{equation} \label{equ:DP-c0-quasibarrelled-3}
f_n - f_m \not\in W^\circ \; \mbox{ for every distinct } n,m \in\NN.
\end{equation}
For every $n\in \NN$, choose $\alpha_n< \kappa$ such that $f_n(x)=f_n(\alpha_n)$ for every $x>\alpha_n$, see \cite[Example~3.1.27]{Eng}. Set $\alpha:= \sup\{ \alpha_n: n\in\NN\}$. Since $\mathrm{cf}(\kappa)>\w$, we have $\alpha<\kappa$. For every $n\in \NN$, set $g_n:= f_n|_{[0,\alpha]}$. Since the restriction operator $T: \CC\big([0,\kappa)\big)\to C\big([0,\alpha]\big)$ onto $[0,\alpha]$ is continuous, it is weakly continuous and hence $T(Q)$ is a weakly compact subset of the Banach space $C\big([0,\alpha]\big)$. By the Eberlein--\v{S}mulian theorem, $T(Q)$  is weakly sequentially compact. Therefore, passing to a subsequence if needed, we can  assume that $g_n$ weakly converges to some $g\in C\big([0,\alpha]\big)$. In particular, $g_n(\alpha)=f_n(\alpha) \to g(\alpha)$. Define $f\in E$ by $f|_{[0,\alpha]}:= g$ and $f|_{(\alpha,\kappa)}:=g(\alpha)$. Taking into account that $f_n|_{(\alpha,\kappa)}=f_n(\alpha) \mathbf{1}_{(\alpha,\kappa)}$ (where $\mathbf{1}_A$ denotes the characteristic function of a subset $A$), we obtain  that $f_n\to f$  in the weak topology of $E$. Note that $f_n - f_{n+1}$ weakly converges to zero.

Now (\ref{equ:DP-c0-quasibarrelled-3}) implies that there is a sequence $S=\{ \mu_n: n\in\NN\}$ in $W$ such that
\begin{equation} \label{equ:DP-c0-quasibarrelled-4}
\mu_n (f_n - f_{n+1}) >1 \; \mbox{ for every  } n \in\NN.
\end{equation}
For every $n\in \NN$, choose $\alpha\leq\beta_n< \kappa$ such that $\supp(\mu_n) \subseteq [0,\beta_n]$. Set $\beta:= \sup\{ \beta_n: n\in\NN\}$. Since $\mathrm{cf}(\kappa)>\w$, we have $\beta<\kappa$.
By Lemma \ref{l:M(K)-M(X)}, the sequence $S$ is contained in the closed subspace $L:= M_{[0,\beta]} \big([0,\kappa)\big)$ of $M_c \big([0,\kappa)\big)_\beta$ and $L$ is topologically isomorphic to the Banach space $M([0,\beta])$. Therefore, the set $W_\beta := W\cap L$ is a weakly compact subset of $M([0,\beta])$. Once again applying the Eberlein--\v{S}mulian theorem and passing to a subsequence if needed, we can assume that $\{ \mu_n\}_{n\in\NN}$ weakly converges to a measure $\mu\in L\subseteq M_c \big([0,\kappa)\big)_\beta$.
Now (\ref{equ:DP-c0-quasibarrelled-4}) and the $(sDP)$ property of $\CC\big([0,\kappa)\big)$ proved above imply
\[
1< \mu_n (f_n - f_{n+1}) = (\mu_n -\mu) (f_n - f_{n+1}) + \mu (f_n - f_{n+1}) \to 0.
\]
This contradiction shows that $\CC(X)$ has the $(DP)$ property.

\smallskip
(iv) Assume that $X$ is a locally compact and paracompact space. Theorem 5.1.27 of \cite{Eng} states that $X=\bigoplus_{i\in I} X_i$ is the direct topological sum of a family $\{ X_i\}_{i\in I}$ of Lindel\"{o}f locally compact spaces. Since all $X_i$ are hemicompact by \cite[Ex.3.8.c]{Eng}, (i) implies that all spaces $\CC(X_i)$ have the $(DP)$ property and the $(sDP)$ property. Therefore the space $\CC(X)=\prod_{i\in I} \CC(X_i)$ has the $(DP)$ property by \cite[9.4.3(a)]{Edwards}. Now we check that $\CC(X)$ has also the $(sDP)$ property.

Let $\{ f_n\}_{n\in\NN}$ and $\{ \mu_n\}_{n\in\NN}$ be weakly null sequences in $\CC(X)$ and $M_c(X)_\beta$, respectively. Choose a countable subfamily $J$ of the index set $I$ such that
\[
\bigcup_{n\in\NN} \supp(\mu_n) \subseteq \bigcup_{j\in J} X_j, \; \mbox{ and set } \; Y:=\bigcup_{j\in J} X_j.
\]
For every $n\in\NN$, set $g_n:= f_n |_Y$ and let $\nu_n$ be the restriction of $\mu_n$ onto $Y$. By construction,  $Y$ and $X\setminus Y$ are clopen subsets of $X$. Therefore $\CC(X)=\CC(Y)\times \CC(X\setminus Y)$ and hence  $M_c(X)_\beta = M_c(Y)_\beta \times M_c(X\setminus Y)_\beta$.  Since the projection onto the first summand is continuous, it is weakly continuous as well. Thus $ \{ g_n\}_{n\in\NN}$ and  $ \{ \nu_n\}_{n\in\NN}$  are weakly null sequences in $\CC(Y)$ and $M_c(Y)_\beta$, respectively. As $Y$ is hemicompact, (i) implies  $\mu_n(f_n)= \nu_n(g_n) \to 0$ as $n\to\infty$. Thus $\CC(X)$ has the $(sDP)$ property.
\qed

\medskip

Having in mind the $(DP)$ property one may ask: {\em Under which condition a null-sequence $\{ f_n: n\in\NN\}$  in $C_{p}(X)$ weakly converges to zero in $\CC(X)$}?  Recall  the following known observation which is a consequence of  the Lebesgue's dominated convergence theorem and the fact that every measure $\mu \in \CC(X)'=M_c(X)$ has compact support:
\begin{fact}\label{fact:weakly-null-Ck}
Let $X$ be a Tychonoff space and let $S=\{ f_n:n\in\NN\}$ be a null-sequence in $C_p(X)$. If $S$ is bounded in $\CC(X)$, then $f_{n}\rightarrow 0$ in the space $\CC(X)_{w}$, which means the space $\CC(X)$ endowed with the weak topology of $\CC(X)$.
\end{fact}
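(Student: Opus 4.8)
The plan is to check the definition of weak convergence to zero directly, i.e.\ to show that $\mu(f_n)\to 0$ for every $\mu$ in the dual $\CC(X)'=M_c(X)$. So I would fix such a $\mu$ and put $K:=\supp(\mu)$, which is a compact subset of $X$. Because the sequence $S$ is bounded in $\CC(X)$, it is in particular uniformly bounded on the compact set $K$: there is a constant $M>0$ with $|f_n(x)|\le M$ for all $n\in\NN$ and all $x\in K$.

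The second step uses that $S$ is a null-sequence in $C_p(X)$, so $f_n(x)\to 0$ for every $x\in X$ and hence the restrictions $f_n|_K$ converge to $0$ pointwise on $K$. The constant function $M$ is integrable with respect to the (finite) total variation measure $|\mu|$ on $K$, so Lebesgue's dominated convergence theorem gives $\int_K f_n\,d\mu\to 0$. Since $\mu$ is concentrated on $K$ we have $\mu(f_n)=\int_X f_n\,d\mu=\int_K f_n\,d\mu$, whence $\mu(f_n)\to 0$. As $\mu\in M_c(X)$ was arbitrary, this is exactly the assertion that $f_n\to 0$ in $\CC(X)_w$.

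I do not expect any genuine obstacle here; the argument is a routine application of dominated convergence. The only points that deserve a word of care are that ``bounded in $\CC(X)$'' supplies a uniform bound only on each compact subset of $X$ --- which is precisely what is needed, the relevant compact set being $\supp(\mu)$ --- and that, $\mu$ being a signed rather than a positive measure, one applies the dominated convergence theorem with the dominating function integrated against the total variation $|\mu|$.
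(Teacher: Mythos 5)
Your argument is correct and is exactly the one the paper intends: the Fact is stated there as "a consequence of the Lebesgue's dominated convergence theorem and the fact that every measure $\mu\in \CC(X)'=M_c(X)$ has compact support," which is precisely your combination of the uniform bound on $K=\supp(\mu)$, pointwise convergence, and dominated convergence against $|\mu|$. No issues.
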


Using (i) of Theorem \ref{t:DP-Ck} and Fact \ref{fact:weakly-null-Ck} we obtain the following easy
\begin{proposition} \label{p:bounded-sequence-in-Ck}
Let $X$ be a hemicompact space and let $S=\{ f_n:n\in\NN\}$ be a null-sequence in $C_p(X)$. Then the following assertions are equivalent:
\begin{enumerate}
\item[{\rm (i)}] $S$ is bounded in the space $\CC(X)$;
\item[{\rm (ii)}] $\mu_n(f_n)\to 0$ for every weakly null-sequence $\{\mu_{n}: n\in\NN\}$ in the strong dual of $\CC(X)$.
\end{enumerate}
\end{proposition}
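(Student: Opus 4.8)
The plan is to prove the two implications separately. The direction (i) $\Rightarrow$ (ii) will follow immediately from the machinery already in place, while (ii) $\Rightarrow$ (i) will be handled by contraposition, constructing an explicit null sequence of Dirac-type measures in the strong dual that witnesses the failure of (ii).

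For (i) $\Rightarrow$ (ii): I would start from the assumption that $S$ is bounded in $\CC(X)$. Since $S$ is by hypothesis a null-sequence in $C_p(X)$, Fact \ref{fact:weakly-null-Ck} applies and yields $f_n \to 0$ in $\CC(X)_w$, i.e. $\{f_n\}$ is a weakly null sequence in $\CC(X)$. Because $X$ is hemicompact, Theorem \ref{t:DP-Ck}(i) guarantees that $\CC(X)$ has the $(sDP)$ property. Feeding the weakly null sequence $\{f_n\}$ and an arbitrary weakly null sequence $\{\mu_n\}$ of the strong dual $M_c(X)_\beta$ into the definition of the $(sDP)$ property then gives $\mu_n(f_n) \to 0$, which is exactly (ii).

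For (ii) $\Rightarrow$ (i): I would argue contrapositively, assuming $S$ is not bounded in $\CC(X)$. Since the sets $[K;\e]$ form a base at zero, unboundedness means there is a compact $K\subseteq X$ with $\sup_n \sup_{x\in K}|f_n(x)| = \infty$; as each continuous $f_n$ is bounded on $K$, this supremum must in fact be infinite over every tail of the sequence, so one can extract a strictly increasing $n_1<n_2<\cdots$ together with points $x_k\in K$ satisfying $|f_{n_k}(x_k)| > k$. Put $\mu_{n_k} := f_{n_k}(x_k)^{-1}\delta_{x_k}\in M_K(X)$ and $\mu_n := 0$ for the remaining indices. Then $\mu_{n_k}(f_{n_k}) = 1$ for all $k$, so $\mu_n(f_n)\not\to 0$. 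On the other hand the total variation norm of $\mu_{n_k}$ in the Banach space $M(K)$ equals $|f_{n_k}(x_k)|^{-1} < 1/k \to 0$, so $\mu_{n_k}\to 0$ in $M(K)$; by Lemma \ref{l:M(K)-M(X)}, $M_K(X)$ carries precisely the Banach topology of $M(K)$ as a topological subspace of $M_c(X)_\beta$, hence $\mu_{n_k}\to 0$ in $M_c(X)_\beta$ and therefore the whole sequence $\{\mu_n\}$ is strongly (a fortiori weakly) null in $M_c(X)_\beta$. This contradicts (ii), and the contrapositive is established.

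The bulk of this is routine; the one place deserving attention is the transfer of norm convergence of $\{\mu_{n_k}\}$ from the Banach space $M(K)$ to convergence in the strong dual $M_c(X)_\beta$, which is exactly what Lemma \ref{l:M(K)-M(X)} is there to supply. A secondary point to be careful about is that the extraction of $(n_k)$ can be arranged to be strictly increasing: this is justified because each partial supremum $\sup_{x\in K}|f_n(x)|$ is finite, so finiteness of the supremum over any tail would force finiteness of the full supremum, contrary to unboundedness of $S$.
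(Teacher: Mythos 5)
Your proposal is correct and follows essentially the same route as the paper: (i)$\Rightarrow$(ii) via Fact \ref{fact:weakly-null-Ck} and the $(sDP)$ property from Theorem \ref{t:DP-Ck}(i), and (ii)$\Rightarrow$(i) by contraposition with normalized Dirac measures pushed into $M_c(X)_\beta$ through Lemma \ref{l:M(K)-M(X)}. The only (harmless) cosmetic difference is that the paper locates the offending compact set among the members of a fundamental sequence of compacta, whereas you extract it directly from the definition of boundedness in the compact-open topology.
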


\begin{proof}
(i)$\Rightarrow$(ii) immediately follows from (i) of Theorem \ref{t:DP-Ck} and Fact \ref{fact:weakly-null-Ck}.

(ii)$\Rightarrow$(i) Let $\{ K_n:n\in\NN\}$ be a fundamental  (increasing) sequence of compact sets in $X$. If all the $K_n$ are finite, then $C_{p}(X)=\CC(X)$ and hence $S$ is bounded by the assumption $f_{n}\rightarrow 0$ in $C_{p}(X)$. So we shall assume that all $K_n$ are infinite.

Suppose for a contradiction that $S$ is not bounded in $\CC(X)$. Then there exists $K:=K_{m}$ such that $S$ is unbounded in the Banach space $C(K):=(C(K),\|.\|)$. Let $k_1<k_2<\cdots$ be a sequence in $\NN$ such that $\|f_{k_n}\|\geq n$ for all $n\in \NN$. For every $n\in\NN$, pick $x_{n}\in K$ such that $|f_{k_n}(x_{n})|=\|f_{k_n}\|$ and set
\[
\mu_{i}:= \|f_{k_n}\|^{-1}\delta_{x_{n}} \mbox{ if } i=k_n \mbox{ for some } n\in\NN, \mbox{ and } \mu_{i}:=0 \mbox{ otherwise}.
\]
Then the sequence $M:=\{ \mu_{i}:i\in\NN\}$ converges to zero in the norm dual of $C(K)$.  It follows from Lemma \ref{l:M(K)-M(X)} that
$\mu_i \to 0$ in the strong dual of $\CC(X)$. Therefore $M$ is a weakly null-sequence in the strong dual of $\CC(X)$. But since $|\mu_{k_n}(f_{k_n})|=1$ for every $n\in M$, we obtain that (ii) does not hold. This contradiction shows that $S$ is bounded in $\CC(X)$.
\end{proof}

\begin{remark} \label{r:Ck-Cp-bounded} {\em
Let $X$ be a Tychonoff space containing an infinite compact subset $K$. Then $C_p(X)$ contains a null-sequence which is not bounded for $\CC(X)$.
Indeed, since $K$ is infinite, there is an infinite discrete sequence $\{ x_n\}_{n\in\NN}$ in $K$ with pairwise disjoint neighborhoods $V_n$ of $x_n$ in $X$, see \cite[Lemma 11.7.1]{Jar}. For every $n\in\NN$, choose a function $f_n: X \to [0,n]$ with support in $V_n$ and $f_n(x_n)=n$. As $V_n$ are pairwise disjoint, $f_n \to 0$ in $C_p(X)$. It is clear that, for each $m\in\NN$, we have
\[
f_n \not\in\{ f\in\CC(X): |f(x)|\leq m \mbox{ for all } x\in K\}, \mbox{ for every } n>m.
\]
Thus $\{ f_{n}:n\in\NN\}$ is not bounded in $\CC(X)$.} \qed
\end{remark}

To prove Theorem \ref{t:Grothendieck-Ck} we need the following lemma which
 (for compact spaces) actually is noticed in \cite[p.138]{Dales-Lau}. Recall that a sequence $\{ x_n\}_{n\in\NN}$ in a topological space $X$ is called {\em trivial} if there is $m\in\NN$ such that $x_n =x_m$ for every $n\geq m$.
\begin{lemma} \label{l:Grothendieck-sequence}
Let $X$ be a Tychonoff space. If $\CC(X)$ has the Grothendieck property, then $X$ does not contain non-trivial convergent sequences.
\end{lemma}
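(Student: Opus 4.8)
The plan is to argue by contraposition: assuming that $X$ contains a non-trivial convergent sequence, I will exhibit a weak-$\ast$ null sequence in the strong dual $M_c(X)_\beta$ of $\CC(X)$ that is not weakly convergent, which shows that $\CC(X)$ does not have the Grothendieck property. Let $\{x_n\}_{n\in\NN}$ converge to $x_\infty$ non-trivially. In a Hausdorff space a value different from $x_\infty$ can be attained only finitely often by a sequence converging to $x_\infty$, and a non-trivial convergent sequence is not eventually constant; hence, after passing to a subsequence and deleting at most one term, I may assume that the points $x_n$ are pairwise distinct, all different from $x_\infty$, and still $x_n\to x_\infty$. Put $K:=\{x_n:n\in\NN\}\cup\{x_\infty\}$, a compact subspace of $X$, and recall that by Lemma~\ref{l:M(K)-M(X)} the natural map $J$ is a topological isomorphism of the Banach space $M(K)=C(K)'$ onto the subspace $M_K(X)$ of $M_c(X)_\beta$.

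Inside $M(K)$ consider $\mu_n:=\delta_{x_n}-\delta_{x_\infty}$. For every $g\in C(K)$ we have $\mu_n(g)=g(x_n)-g(x_\infty)\to 0$, so $\{\mu_n\}_{n\in\NN}$ is weak-$\ast$ null in $M(K)$. On the other hand $K\setminus\{x_\infty\}$ is open, hence Borel, in $K$, so $\mu\mapsto\mu(K\setminus\{x_\infty\})$ is a continuous linear functional on $M(K)$, and its value at $\mu_n$ is $\delta_{x_n}(K\setminus\{x_\infty\})-\delta_{x_\infty}(K\setminus\{x_\infty\})=1$ for every $n$; thus $\{\mu_n\}_{n\in\NN}$ is \emph{not} weakly null in $M(K)$. (Equivalently, $\{\delta_{x_n}:n\in\NN\}\cup\{\delta_{x_\infty}\}$ spans an isometric copy of $\ell_1$ in $M(K)$ in which $\mu_n$ is $e_n-e_\infty$, which is not weakly null since $\ell_1$ has the Schur property.)

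Now I would transfer this to $\CC(X)$. Set $\nu_n:=J(\mu_n)\in M_c(X)_\beta$. Since $\int_X f\,dJ(\mu_n)=\int_K f|_K\,d\mu_n=f(x_n)-f(x_\infty)$ and $x_n\to x_\infty$ in $X$, we get $\nu_n(f)\to 0$ for every $f\in C(X)$, so $\{\nu_n\}_{n\in\NN}$ is weak-$\ast$ null in $M_c(X)_\beta$. Suppose, towards a contradiction, that $\{\nu_n\}_{n\in\NN}$ converges in the weak topology of $M_c(X)_\beta$; the weak limit must be $0$, because the weak-$\ast$ topology is coarser and Hausdorff. By the Hahn--Banach theorem for locally convex spaces, every continuous linear functional on the subspace $M_K(X)$ extends to $M_c(X)_\beta$, so $\nu_n\to 0$ in the weak topology of $M_K(X)$; as $J$ is a topological isomorphism onto $M_K(X)$, this forces $\mu_n\to 0$ weakly in the Banach space $M(K)$, contradicting the previous paragraph. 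Hence $\{\nu_n\}_{n\in\NN}$ witnesses the failure of the Grothendieck property, which completes the contrapositive.

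I expect the only delicate point to be the last transfer: one has to be sure that weak convergence in the ambient strong dual $M_c(X)_\beta$ restricts, along $J$, to weak convergence in the Banach space $M(K)$, and this is precisely where the topological identification $M_K(X)\cong M(K)$ of Lemma~\ref{l:M(K)-M(X)} is combined with the Hahn--Banach extension property for subspaces of a locally convex space. Everything else is routine manipulation with Dirac measures.
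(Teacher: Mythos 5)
Your argument is correct and is essentially the paper's proof: the same compact set $K=\{x_n\}\cup\{x_\infty\}$, the same weak-$\ast$ null sequence $\delta_{x_n}-\delta_{x_\infty}$, and the same separating functional (your $\mu\mapsto\mu(K\setminus\{x_\infty\})$ coincides with the paper's $\mu\mapsto\sum_n\mu(\{x_n\})$ by countable additivity), transferred to $M_c(X)_\beta$ via Lemma~\ref{l:M(K)-M(X)} and Hahn--Banach. The only cosmetic difference is that the paper extends the functional to $M_c(X)_\beta$ and evaluates it there directly, while you restrict the putative weak convergence back to $M_K(X)\cong M(K)$; these are the same use of Hahn--Banach.
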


\begin{proof}
Suppose for a contradiction that there is a sequence $S=\{ x_n\}_{n\in\NN}$ converging to a point $x\in X\setminus S$, we shall assume that $S$ is one-to-one. Then the sequence $\{ \delta_{x_n}-\delta_x\}_{n\in\NN}$ evidently converges weakly-$\ast$ to $0$ (as usual $\delta_z$ denotes the Dirac measure at $z\in X$). Set $K:=S\cup\{ x\}$, so $K$ is a compact subset of $X$. It is easy to see that the map
\[
\mu \mapsto \sum_{n\in\NN} \mu\big( \{ x_n\} \big), \quad \mu\in M(K),
\]
is a continuous linear functional of the Banach space $M(K)$. Then, by Lemma \ref{l:M(K)-M(X)} and the Hahn--Banach extension theorem, there exists an extension  $\chi$ of this map to a continuous linear functional on $M_c(X)_\beta$. Since $\chi\big( \delta_{x_n}-\delta_x\big) =1$ for every $n\in\NN$, we see that $\delta_{x_n}-\delta_x \not\to 0$ in the weak topology of $M_c(X)_\beta$.
\end{proof}

For the convenience of the reader we recall some definitions used in Theorem \ref{t:Grothendieck-Ck}.
A  topological space $X$ is a {\em $\mu$-space} if $X$ is Tychonoff  and every functionally bounded subset of $X$ is relatively compact. The Nachbin--Shirota theorem states that a Tychonoff  space $X$ is a $\mu$-space if and only if $\CC(X)$ is barrelled.
A topological space $X$ is called {\em sequential} if for each non-closed subset $A\subseteq X$ there is a sequence $\{a_n\}_{n\in\w}\subseteq A$ converging to some point $a\in \overline{A}\setminus A$.
We note that a sequential space $X$ is discrete if and only if it does not contain non-trivial convergent sequences. (Indeed, if $z\in X$ is non-isolated, then the set $A:=X\setminus\{z \}$  is non-closed and hence there is a sequence (necessarily non-trivial) in $A$ converging to $z$.)
Now we are ready to prove Theorem \ref{t:Grothendieck-Ck}.

\medskip
{\em Proof of Theorem \ref{t:Grothendieck-Ck}}.
(i) Let $X$ be a $\mu$-space whose compact subsets are sequential. Assume that $\CC(X)$ has the Grothendieck property. Let $A$ be a functionally bounded subset of $X$. Then its closure $\overline{A}$ is a compact subset of $X$.  By Lemma \ref{l:Grothendieck-sequence}, $\overline{A}$ does not contain non-trivial convergent sequences. Since $\overline{A}$ is sequential we obtain that $A$ is finite. Conversely, if every functionally bounded subset of $X$ is finite, then $\CC(X)=C_p(X)$ and Theorem \ref{t:DP-Cp} applies.

(ii) Let $X$ be a sequential space. Assume that $\CC(X)$ has the Grothendieck property. Then, by Lemma \ref{l:Grothendieck-sequence}, the space $X$ does not  contain non-trivial convergent sequences. Since $X$ is sequential we obtain that $X$ is discrete. Conversely, if $X$ is discrete, then $\CC(X)=C_p(X)$ and Theorem \ref{t:DP-Cp} applies. \qed

\medskip

\bibliographystyle{amsplain}

\end{document}